\numberwithin{equation}{section}
\newcounter{marnote}
\def \dis {\displaystyle}
\def \into {\int_\Omega}
\def \confai {-\kern -.5em\rightharpoonup}
\def \div{\mbox{\rm div}\,}
\def \into {\int_\Omega}
\def \al {\alpha}
\def \be {\beta}
\def \ep {\varepsilon}
\def \om {\omega}
\def \Om {\Omega}
\def \la {\lambda}
\def \NN {\mathbb N}
\def \RR {\mathbb R}
\def \E {\mathscr{E}}
\def \beq {\begin{equation}}
\def \eeq {\end{equation}}
\def \ba {\begin{array}}
\def \ea {\end{array}}
\def \ecart {\noalign{\medskip}}
\DeclareMathOperator{\sgn}{sgn}
\DeclareMathOperator{\dom}{dom}
\newtheorem{Thm}{Theorem}[section]
\newtheorem{Pro}[Thm]{Proposition}
\newtheorem{Adef}[Thm]{Definition}
\newenvironment{Def}{\begin{Adef}}{\end{Adef}}
\newtheorem{Arem}[Thm]{Remark}
\newenvironment{Rem}{\begin{Arem}\rm}{\end{Arem}}
\newtheorem{Aexa}[Thm]{Example}
\newtheorem{Anot}[Thm]{Notation}
\def \reff #1.{figure~\ref{#1}}
\def \refs #1.{Section~\ref{#1}}
\def \refss #1.{Subsection~\ref{#1}}
\def \refD #1.{Definition~\ref{#1}}
\def \refT #1.{Theorem~\ref{#1}}
\def \refL #1.{Lemma~\ref{#1}}
\def \refC #1.{Corollary~\ref{#1}}
\def \refP #1.{Proposition~\ref{#1}}
\def \refPt #1.{Properties~\ref{#1}}
\def \refR #1.{Remark~\ref{#1}}
\def \refE #1.{Example~\ref{#1}}
\def \refN #1.{Notation~\ref{#1}}
\title{On the regularity of optimal potentials in control problems governed by elliptic equations}
\begin{document}
\maketitle
\centerline{\large G. Buttazzo$^\dag$,\hskip 0.3cm J. Casado-D\'{\i}az$^{\dag\dag}$,\hskip 0.3cm F. Maestre$^{\dag\dag}$}
\bigskip 
\centerline{{$^\dag$} Dipartimento di Matematica, Universit\`a di Pisa,}
\centerline{Largo B. Pontecorvo, 5}
\centerline{56127 Pisa, ITALY}
\bigskip
\centerline{$^{\dag\dag}$ Dpto. de Ecuaciones Diferenciales y An\'alisis Num\'erico,}
\centerline{Facultad de Matem\'aticas, C. Tarfia s/n}
\centerline{41012 Sevilla, SPAIN}
\bigskip
\centerline{e-mail: giuseppe.buttazzo@unipi.it, jcasadod@us.es,
fmaestre@us.es}

\begin{abstract}
In this paper we consider optimal control problems where the control variable is a potential and the state equation is an elliptic partial differential equation of a Schr\"odinger type, governed by the Laplace operator. The cost functional involves the solution of the state equation and a penalization term for the control variable. While the existence of an optimal solution simply follows by the direct methods of the calculus of variations, the regularity of the optimal potential is a difficult question and under the general assumptions we consider, no better regularity than the $BV$ one can be expected. This happens in particular for the cases in which a bang-bang solution occurs, where optimal potentials are characteristic functions of a domain. We prove the $BV$ regularity of optimal solutions through a regularity result for PDEs. Some numerical simulations show the behavior of optimal potentials in some particular cases.
\end{abstract}

\textbf{Keywords: }optimal potentials, BV regularity, bang-bang property, shape optimization, control problems.

\medskip

\textbf{2020 Mathematics Subject Classification: }49Q10, 49J45, 35B65, 35R05, 49K20.

\section{Introduction}\label{intro}

The starting point of our research is an optimal control problem of the form
\beq\label{cost}
\min\int_\Om\big[j(x,u)+\psi(m)\big]\,dx,
\eeq
governed by the state equation
\beq\label{state}
\begin{cases}
-\Delta u+mu=f\quad\text{in }\Om\\
u\in H^1_0(\Om).
\end{cases}
\eeq
Here $\Om$ is a bounded open subset of $\RR^N$, the control variable $m$ is assumed to be nonnegative, $f\in L^2(\Om)$, and $j,\psi$ are suitable integrands. We assume that $\psi$ has a superlinear growth, which automatically implies that the control variables are in $L^1(\Om)$. Notice that, when $j(x,u)=f(x)u$, the problem can be written in the variational form
$$\min\Big\{-2\E(m)+\Psi(m)\ :\ m\in L^1(\Om),\ m\ge0\Big\},$$
where
\[\begin{split}
&\Psi(m)=\int_\Om\psi(m)\,dx\\
&\E(m)=\min\left\{\int_\Om\Big[\frac12|\nabla u|^2+\frac12 mu^2-f(x)u\Big]\,dx\ :\ u\in H^1_0(\Om)\right\}.
\end{split}\]
In this case it is possible to see that the control variable $m$ can be eliminated, obtaining the {\it auxiliary} variational problem
\beq\label{aux}
\min\left\{\int_\Om\Big[|\nabla u|^2+\psi^*(u^2)-2f(x)u\Big]\,dx\ :\ u\in H^1_0(\Om)\right\},
\eeq
where $\psi^*$ denotes the Fenchel-Moreau conjugate of the function $\psi$. Setting $g(s)=s(\psi^*)'(s^2)$, the unique solution $\hat u$ of \eqref{aux} can be obtained through the PDE
\beq\label{auxpde}
\begin{cases}
-\Delta u+g(u)=f\quad\text{in }\Om\\
u\in H^1_0(\Om),
\end{cases}
\eeq
and the optimal control $\hat m$ can be then recovered as
$$\hat m=(\psi^*)'(\hat u^2).$$
For general integrands $j(x,u)$ the elimination procedure above is not possible, and the necessary conditions of optimality involve an adjoint state variable and the corresponding adjoint PDE. Nevertheless, we can show that the optimal control problem \eqref{cost}, \eqref{state} admits a solution $(\hat u,\hat m)$. The main goal of the present paper is to show that, under suitable assumptions on the data, the optimal control $\hat m$ has additional regularity properties. In particular, we show that $\hat m\in BV(\Om)$.

We stress that, under the general assumptions we consider on the function $\psi$, higher regularity properties on $\hat m$ do not hold. Indeed, when
$$\psi(s)=\begin{cases}
s&\text{if }s\in[\alpha,\beta]\qquad\text{(with $0\le\alpha<\beta$)}\\
+\infty&\text{otherwise,}
\end{cases}$$
the optimal control $\hat m$ is of {\it bang-bang} type, that is
$$\hat m=\alpha+(\beta-\alpha)1_E$$
for a suitable set $E$, which then turns out to be a set with finite perimeter.

Problems of this form arise for instance in some biological models, governed by logistic diffusive equations, where one aims to control the size of a total population or the optimal location of resources, see for instance \cite{MNP1} and \cite{MNP2}.

The proof of the $BV$ regularity above is obtained through a careful analysis of a nonlinear elliptic PDE of the form \eqref{auxpde}. In general, for the cases arising from optimal control problems, the right-hand side $f$ has a low summability and does not belong to the space $H^{-1}(\Om)$; the definition of solution is then more involved and has to be given as in the theory of renormalized solutions (see for instance \cite{BeBoGaGaPiVa}, \cite{DMMuOrPr}).

In Section \ref{Not} we list the notation that is used along the paper, in Section \ref{S1} we study the semilinear problems \eqref{auxpde} with the nonlinearity $g$ possibly discontinuous and the right-hand side $f$ having a low summability. Our goal is to show that, when $f\in BV(\Om)$, the solution $u$ is such that $g(u)\in BV(\Om)$. Section \ref{S2} deals with the application of the result above to the optimal control problem \eqref{cost}, \eqref{state}. In Section \ref{S5} we consider some relevant examples with various particular choices of the data. Finally in Section \ref{S6} we provide some numerical simulations which show the bang-bang behavior of optimal solutions in some cases, as well as the continuous behavior in some other ones.

\section{Notation}\label{Not}

In this section we fix the notation that we use in the rest of the paper.

\begin{itemize}
\item For $s\in\RR$, we denote by $\sgn(s)$ the sign of $s$.
\item For a measurable set $A\subset\RR^N$, we denote by $|A|$ its Lebesgue measure.
\item For a function $g:\RR\to\RR$, and $s\in\RR$, we denote by $g_-(s)$, and $g_+(s)$ the limits on the left and on the right in $s$, respectively (if they exist).
\item For $k>0$ we define the functions $T_k,S_k\in W^{1,\infty}(\RR)$ by
$$T_k(s)=\begin{cases}
s&\hbox{if }|s|<k\\
k\,\sgn(s)&\hbox{if }|s|\ge k,
\end{cases}\qquad
S_k(s)=\begin{cases}
1&\hbox{if }|s|\le k\\
\dis 2-{|s|\over k}&\hbox{if }k<|s|<2k\\
0&\hbox{if }|s|\ge2k.
\end{cases}$$
\item We denote by ${\cal M}(\Om)$ the space of Borel measures in $\Om$ with finite total mass.
\item If $\psi:\RR\to\RR\cup\{\infty\}$ is a lower semicontinuous proper convex function, we denote by $\dom(\psi)$ the domain of $\psi$, defined by
$$\dom(\psi)=\big\{s\in\RR\ :\ \psi(s)<\infty\big\}.$$
It is a non-empty interval of $\RR$, which we assume not reduced to a point. For $s\in\dom(\psi)$, we denote by $\partial\psi(s)$ the subdifferential of $\psi$ at $s$, defined by
\beq\label{defsd}
\partial\psi(s)=\big\{\tau\in\RR\ :\ \tau(t-s)\le\psi(t)-\psi(s)\ \text{ for every }t\in\dom(\psi)\big\}.\eeq
Taking into account that the map
\beq\label{apico}
(s,t)\mapsto{\psi(t)-\psi(s)\over t-s}\qquad\text{for }s<t\eeq
is non-decreasing in both variables $s$ and $t$, we have that for every $s\in\dom(\psi)$ the two limits
$$\begin{cases}
\dis d_+\psi(s):=\lim_{\ep\searrow0}{\psi(s+\ep)-\psi(s)\over\ep}\in(-\infty,\infty]\\
\dis d_-\psi(s):=\lim_{\ep\searrow 0}{\psi(s)-\psi(s-\ep)\over \ep}\in[-\infty,\infty)
\end{cases}$$
exist and
\beq\label{carsd}\partial \psi(s)=\big[d_-\psi(s),d_+\psi(s)\big].\eeq
We also recall that every convex function $\psi$ is locally Lipschitz in the interior of its domain.
\item We denote by $\psi^*$ the Fenchel-Moreau conjugate of $\psi$, defined by
$$\psi^*(t):=\sup_{s\in \dom(\psi)}\big\{ts-\psi(s)\big\}\qquad\forall\,t\in\RR.$$
We recall that $\psi^*$ is also a convex lower semicontinuous function and that
$$(\psi^*)^*=\psi.$$ 
Moreover, we have
\beq\label{repsp} t\in\partial\psi(s)\iff s\in\partial\psi^*(t)\iff ts=\psi(s)+\psi^*(t).\eeq
\end{itemize}

In the following we consider the integral functionals
\[\begin{split}
&J(u)=\int_\Om j(x,u)\,dx\qquad\text{defined for }u\in H^1_0(\Om);\\
&\Psi(m)=\int_\Om\psi(m)\,dx\qquad\text{defined for }m\in L^1(\Om),\ m\ge0;\\
&F(u,m)=J(u)+\Psi(m).
\end{split}\]
The function $\psi:[0,\infty)\to\RR\cup\{\infty\}$ is assumed lower semicontinuous proper and convex, and such that
\beq\label{psisl}\lim_{s\to\infty}\frac{\psi(s)}{s}=\infty;\eeq
in this way the functional $\Psi$ is well defined on $L^1(\Om)$ and $\Psi(m)<\infty$ implies that $m\in L^1(\Om)$. It is easy to see that, with the conditions above, the function $\psi$ is bounded from below; hence up to the addition of a constant, that does not modify our optimization problem, we may assume $\psi$ non-negative.

Concerning the integrand $j(x,s)$ we assume measurability in $x$, lower semicontinuity in $s$ and the bound
\beq\label{condj}
a(x)-c|s|^2\le j(x,s)
\eeq
for suitable $c\ge0$ and $a\in L^1(\Om)$.

The optimization problem we consider is then
$$\min\big\{F(u,m)\ :\ -\Delta u+mu=f,\ u\in H^1_0(\Om),\ m\in L^1(\Om)\big\},$$
where $f\in H^{-1}(\Om)$ is prescribed. In Section \ref{S2} we show that the optimal control problem above admits an optimal pair $(\hat u,\hat m)$. Under some additional assumptions, we obtain the corresponding necessary conditions of optimality and we study the regularity properties of $(\hat u,\hat m)$.


\section{Semilinear problems with a discontinuous term}\label{S1}

In the present section we are interested in the existence and regularity properties of the solutions of a semi-linear problem of the form
\beq\label{ecto0b}
\begin{cases}
-\Delta u+g(u)=f&\text{in }\Om\\
u=0&\text{on }\partial\Om,
\end{cases}\eeq
where the function $g$ is non-decreasing and not necessarily continuous. Moreover, we assume only low integrability on the right-hand side $f$. In particular, we do not assume $f\in H^{-1}(\Om)$, which leads us to work with renormalized or entropy solutions (see for instance \cite{BeBoGaGaPiVa}, \cite{BocGal}, \cite{CaCoVa}). In this sense, we introduce the following definition of solution for problem \eqref{ecto0b}.

\begin{Def}\label{defsol} Assume $f\in L^1(\Om)$. We say that a pair $(u,w)$ is a solution of \eqref{ecto0b} if it satisfies
\begin{align}
&\begin{cases}
u\in H^1_0(\Om)&\hbox{if }N=1\\
\dis u\in W^{1,p}_0(\Om)\ \forall\,p<{N\over N-1},\quad T_k(u)\in H^1_0(\Om)\ \forall\,k>0&\hbox{if }N\ge2,
\end{cases}\label{acsogT2}\\
&w\in L^1(\Om),\quad g_-(u)\leq w\leq g_+(u)\ \hbox{ a.e. in }\Om,\label{acsoRwT2}\\
&\begin{cases}
\dis\into\nabla u\cdot\nabla v\,dx+\into wv\,dx=\into fv\,dx\\
\forall\, v\in H^1_0(\Om)\cap L^\infty(\Om)\hbox{ such that } \exists k>0\ \hbox{ with } \nabla v=0\hbox{ a.e. in }\{|u|>k\}.
\end{cases}\label{Ecv}
\end{align}
\end{Def}

The existence and uniqueness of solutions for problem \eqref{ecto0b} is given by the following theorem.

\begin{Thm}\label{Th2ex} Let $\Om\subset\RR^N$ be a bounded open set, and $g:\RR\to\RR$ a non-decreasing function. Then, for every $f\in L^1(\Om)$, there exists a unique solution $(u,w)$ of \eqref{ecto0b} in the sense of Definition \ref{defsol}. Moreover, it satisfies
\begin{align}
&\|w-g_+(0)\|_{L^1(\Om)}\le\|f-g_+(0)\|_{L^1(\Om)},\label{acowT2}\\
&\begin{cases}
\dis\|u\|_{H^1_0(\Om)}\le C\|f\|_{L^1(\Om)}&\hbox{if }N=1,\\
\dis\|T_k(u)\|^2_{H^1_0(\Om)}\le \into |f-g_+(0)| |T_k(u)|\,dx\quad\forall\,k>0,\\
\dis\|u\|_{W^{1,p}_0(\Om)}\le C\|f-g_+(0)\|_{L^1(\Om)}\quad\forall\,p<{N\over N-1}&\hbox{if }N\ge2.
\end{cases}\label{acotuT2}
\end{align}
In these estimates the constant $C$ only depends on $|\Om|$, $p$, and $N$.\end{Thm}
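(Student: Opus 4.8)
The plan is to obtain $(u,w)$ as the limit of solutions of regularized problems, and then to prove uniqueness by an $L^1$-contraction (Kato) argument. Since adding a constant to $g$ and to $f$ affects neither \eqref{ecto0b} nor the meaning of \eqref{acowT2}, I first normalize so that $g_+(0)=0$; then $g\ge0$ on $(0,\infty)$ and $g\le0$ on $(-\infty,0)$, so $sg(s)\ge0$ and $|g(s)|=g(s)\sgn(s)$ for all $s$, and \eqref{acowT2} reads $\|w\|_{L^1(\Om)}\le\|f\|_{L^1(\Om)}$. I approximate $f$ by $f_n=T_n(f)\in L^\infty(\Om)\subset H^{-1}(\Om)$, so that $\|f_n\|_{L^1}\le\|f\|_{L^1}$ and $f_n\to f$ in $L^1(\Om)$, and $g$ by $g_n$, the Yosida regularization of parameter $1/n$ of the maximal monotone graph obtained from $g$ by filling in its (at most countably many) jumps: each $g_n$ is non-decreasing, Lipschitz, vanishes at $0$, satisfies $|g_n(t)|\le\max(|g_-(t)|,|g_+(t)|)$, and $g_n\to g$ at every continuity point of $g$. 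For fixed $n$ the map $u\mapsto-\Delta u+g_n(u)$ is monotone, coercive and continuous from $H^1_0(\Om)$ to $H^{-1}(\Om)$, hence surjective by the Minty--Browder theorem; strict monotonicity of $-\Delta$ gives a unique $u_n\in H^1_0(\Om)$ with $-\Delta u_n+g_n(u_n)=f_n$, and we set $w_n=g_n(u_n)$.

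The uniform estimates come from the usual test functions. Testing with $T_k(u_n)$ and using $g_n(u_n)T_k(u_n)\ge0$ gives $\|T_k(u_n)\|^2_{H^1_0}\le\int_\Om|f_n|\,|T_k(u_n)|\,dx\ (\le k\|f\|_{L^1})$, whence the Boccardo--Gallou\"et truncation argument yields $\|u_n\|_{W^{1,p}_0}\le C\|f\|_{L^1}$ for $p<N/(N-1)$ when $N\ge2$ (and, testing with $u_n$, the $H^1_0$ bound when $N=1$); testing with $\tfrac1kT_k(u_n)$ and letting $k\to0$ gives $\|w_n\|_{L^1}\le\|f_n\|_{L^1}\le\|f\|_{L^1}$; testing with $T_1\big(u_n-T_k(u_n)\big)$ gives $\int_{\{|u_n|>k\}}|w_n|\le\int_{\{|u_n|>k\}}|f_n|$, which tends to $0$ as $k\to\infty$ uniformly in $n$ (because $|\{|u_n|>k\}|$ is bounded uniformly by $C/k$ and $\{f_n\}$ is equi-integrable), and combined with $|g_n(t)|\le\max(|g_-(t)|,|g_+(t)|)$ this makes $\{w_n\}$ equi-integrable. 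Along a subsequence, $u_n\to u$ a.e.\ and in $L^1(\Om)$, $\nabla u_n\rightharpoonup\nabla u$ in $L^p$, $T_k(u_n)\rightharpoonup T_k(u)$ in $H^1_0(\Om)$ for each $k$, and (Dunford--Pettis) $w_n\rightharpoonup w$ in $L^1(\Om)$; the estimates \eqref{acotuT2}, \eqref{acowT2} and the regularity \eqref{acsogT2} then follow by weak lower semicontinuity. To pass to the limit in $\int\nabla u_n\cdot\nabla v+\int w_nv=\int f_nv$ for $v\in H^1_0(\Om)\cap L^\infty(\Om)$ with $\nabla v=0$ on $\{|u|>k\}$: the last two terms converge since $v\in L^\infty$, and for the first one writes $\int\nabla u_n\cdot\nabla v=\int_{\{|u|\le k\}}\nabla T_{k+1}(u_n)\cdot\nabla v$ up to an error supported on $\{|u|\le k,\ |u_n|>k+1\}$, a set of vanishing measure, controlled via the $L^p$-bound on $\nabla u_n$ and $\nabla v\in L^{p'}$ (choosing $1<p<N/(N-1)$); the main term converges by weak $L^2$ convergence of $\nabla T_{k+1}(u_n)$. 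This gives \eqref{Ecv}.

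It remains to check $g_-(u)\le w\le g_+(u)$ a.e. Fix a continuity point $s_0$ of $g$ and $\eta>0$; by Egorov's theorem, on a subset of $\{u>s_0\}$ of almost full measure one has $u_n>s_0-\eta$ for $n$ large, hence $w_n\ge g_n(s_0-\eta)\to g_-(s_0-\eta)$ there, and testing the weak-$L^1$ convergence of $w_n$ against nonnegative functions supported on that subset gives $w\ge g_-(s_0-\eta)$ a.e.\ on it, hence on all of $\{u>s_0\}$; letting $\eta\to0$ and $s_0\uparrow$ yields $w\ge g_-(u)$ a.e., and $w\le g_+(u)$ a.e.\ is symmetric. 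This identification of the discontinuous term is the least routine point of the existence part. For uniqueness, let $(u_1,w_1)$ and $(u_2,w_2)$ be two solutions; monotonicity of $g$ gives $(w_1-w_2)\sgn(u_1-u_2)\ge0$ a.e. I would test the equations for $u_1$ and $u_2$ with the common function $v_\de=\tfrac1\de T_\de\big(T_k(u_1)-T_k(u_2)\big)S_k(u_1)S_k(u_2)$ — the factors $S_k(u_i)$ guarantee $\nabla v_\de=0$ on $\{|u_i|>2k\}$, so $v_\de$ is admissible in \eqref{Ecv} for both solutions — subtract, and let $\de\to0$ and then $k\to\infty$. The zeroth-order contribution tends to $\int(w_1-w_2)\sgn(u_1-u_2)S_k(u_1)S_k(u_2)\ge0$, while the principal term splits into a nonnegative part plus error terms concentrated on $\{k<|u_i|<2k\}$, which are absorbed using $\int_{\{k<|u_i|<2k\}}|\nabla u_i|^2\to0$ as $k\to\infty$ (obtained by testing each equation with $T_1(u_i-T_k(u_i))$ and using $\|w_i\|_{L^1}<\infty$); one concludes $\nabla(u_1-u_2)=0$, hence $u_1=u_2$, and then $w_1=w_2$ by testing \eqref{Ecv} with functions $\varphi(u_1)\zeta$, $\varphi\in C^1_c(\RR)$, $\zeta\in C^\infty_c(\Om)$. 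The careful bookkeeping of these error terms — essentially the reason the renormalized-solution framework carries an extra decay condition at infinity — is the main obstacle of the proof.
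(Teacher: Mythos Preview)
Your strategy matches the paper's: regularize $g$, derive the a~priori bounds from truncation test functions, extract a weak limit by Dunford--Pettis, and establish uniqueness via an $L^1$-contraction argument. The paper mollifies $g$ with a kernel supported in $(-1,0)$ so that $g(s)\le g_n(s)\le g(s+\tfrac1n)$ (making the sandwich $g_-(u)\le w\le g_+(u)$ immediate without Egorov), does not approximate $f$, and defers uniqueness to Theorem~\ref{Thunic}; but your Yosida and Egorov variants are legitimate alternatives.

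There is, however, a genuine gap in your passage to the limit in the gradient term. You write the error as
\[
\int_{\{|u|\le k,\ |u_n|>k+1\}}\nabla u_n\cdot\nabla v\,dx
\]
and claim it is controlled by the $L^p$-bound on $\nabla u_n$ together with ``$\nabla v\in L^{p'}$''. But for $1<p<\tfrac{N}{N-1}$ one has $p'>N\ge2$, while the test functions in \eqref{Ecv} are only assumed in $H^1_0(\Om)\cap L^\infty(\Om)$; so $\nabla v\in L^{p'}$ is unjustified. On the bad set $\nabla T_{k+1}(u_n)=0$, so no $L^2$ control on $\nabla u_n$ is available there either, and the splitting does not close. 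The paper avoids this by testing the approximate equation with $S_m(u_n)v$ rather than $v$: the factor $S_m(u_n)$ kills the contribution from $\{|u_n|>2m\}$, the cross term it creates is bounded by $\tfrac{\|v\|_{L^\infty}}{m}\int_{\{m<|u_n|<2m\}}|\nabla u_n|^2$, and one then sends $m\to\infty$ using the truncation energy estimate. Replacing your splitting by this device repairs the argument.

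A secondary concern: your uniqueness test function $\tfrac{1}{\de}T_\de\big(T_k(u_1)-T_k(u_2)\big)S_k(u_1)S_k(u_2)$ produces, besides the $S_k'$-errors you mention, additional terms on the mixed regions $\{|u_1|\ge k,\ |u_2|<k\}$ (and symmetrically) carrying a factor $\tfrac{1}{\de}$, coming from $\nabla T_k(u_1)\ne\nabla u_1$ there; these are not obviously absorbed by the $\int_{\{k<|u_i|<2k\}}|\nabla u_i|^2\to0$ mechanism. The paper's choice $S_m(u_1)S_m(u_2)T_k(u_1-u_2)^+$ sidesteps this, since $\nabla T_k(u_1-u_2)^+=1_{\{0<u_1-u_2<k\}}\nabla(u_1-u_2)$ makes the principal term manifestly nonnegative without any mixed-region contribution.
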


\begin{Rem}\label{Retex} Assuming in Theorem \ref{Th2ex}, $f$ in $L^q(\Om)$, with $q>1$, the classical estimates for renormalized solutions (see for instance \cite{BeBoGaGaPiVa}, \cite{BocGal}, \cite{DMMuOrPr}) combined with Stampacchia's estimates (see \cite{Sta}) also prove that the solution $(u,w)$ of \eqref{ecto0b} satisfies
\beq\label{estrenoso}
\begin{cases}
\dis u\in W_0^{1,{Nq\over N-q}}(\Om)&\dis\hbox{if }1<q\leq {2N\over N+2}\\ \ecart
\dis u\in H^1_0(\Om)\cap L^{Nq\over N-2q}(\Om)&\dis\hbox{if }{2N\over N+2}<q<{N\over 2}\\ \ecart
\dis u\in H^1_0(\Om)\cap L^\infty(\Om)&\dis\hbox{if }{N\over2}<q.
\end{cases}\eeq
In particular, assuming $f\in L^{2N\over N+2}(\Om)$, we have that the solution $u$ in Theorem \ref{Th2ex} is in $H^1_0(\Om)$. In this case, it can be defined in a simpler way as the unique solution of the strictly convex minimum problem
$$\min_{v\in H^1_0(\Om)}\into \left({1\over 2}|\nabla v|^2+G(v)-fv\right)dx,$$
with $G:\RR\to\RR$ defined by
$$G(s)=\int_0^s g(r)\,dr\qquad \forall\,s\in \RR.$$
\end{Rem}

\begin{Rem}\label{Retex2}
Theorem \ref{Th2ex} and (\ref{estrenoso}) can be extended with the same proofs to the case where the operator $-\Delta u$ is replaced by $-\div(A(x)\nabla u)$ with $A$ a matrix function in $L^\infty(\Om)^{N\times N}$ satisfying the ellipticity condition
$$\exists\,c>0\hbox{ such that }\ A(x)\xi\cdot\xi\ge c|\xi|^2\quad\forall\,\xi\in\RR^N,\hbox{ a.e. }x\in\Om.$$
Moreover, the equation 
\beq\label{eqre}-\div(A(x)\nabla u)+w=f\eeq
is satisfied in the sense of distributions for $\Om$. Indeed,
in the case of the Laplacian operator, since $-\Delta $ is an isomorphism from $W^{1,p}_0(\Om)$ into $W^{-1,p}(\Om)$, for $1<p<\infty$, it is known that (\ref{Ecv}) is equivalent to require that $(u,w)$ satisfies \eqref{eqre} in the distributions sense in $\Om$. Thus, Theorem \ref{Th2ex} shows that for every $f\in L^1(\Om)$, there exists a unique solution $(u,w)\in W_0^{1,p}(\Om)\cap L^1(\Om)$, $1<p<N/(N-1)$, in the distributions sense, of
\beq\label{ecto0}
\begin{cases}-\Delta u+w=f&\text{in }\Om\\
u=0&\hbox{on }\partial\Om\\
g_-(u)\le w\le g_+(u)&\text{a.e. in }\Om.
\end{cases}\eeq
However, if we replace $-\Delta u$ by $-\div(A\nabla u)$ with $A$ as above, this is no longer true. We refer to \cite{Ser} for a classical counter-example to the uniqueness of the distributional solution.
\end{Rem}

The following result proves the continuous dependence with respect to the right-hand side and a maximum principle.

\begin{Thm}\label{Thunic} Let $\Om\subset\RR^N$ be a bounded open set and $g:\RR\to\RR$ a non-decreasing function. For $f_1,f_2\in L^1(\Om)$ we take $(u_1,w_1)$, $(u_2,w_2)$ solutions of (\ref{ecto0b}) with $f=f_1$ and $f=f_2$ respectively. Then, we have
\begin{align}
&\|w_1-w_2\|_{L^1(\Om)}\le\|f_1-f_2\|_{L^1(\Om)};\label{estdw}\\
&\|T_k(u_1-u_2)\|_{H^1_0(\Om)}\le k\|f_1-f_2\|_{L^1(\Om)}\qquad\forall\,k>0;\label{estdu1}\\
&\begin{cases}
\|u_1-u_2\|_{H^1_0(\Om)}\le C\|f_1-f_2\|_{L^1(\Om)}&\hbox{if }N=1\\
\dis\|u_1-u_2\|_{W^{1,p}_0(\Om)}\le C\|f_1-f_2\|_{L^1(\Om)}\quad\forall\,p\in\Big(1,{N\over N-1}\Big)&\hbox{if }N\ge2.\end{cases}\label{estdu2}
\end{align}
The constant $C>0$ in this last inequality only depends on $|\Om|$ if $N=1$. For $N\geq 2$ it only depends on $N$, $p$ and $|\Om|$. In addition,
$$f_1\le f_2\hbox{ a.e. in }\Om\ \Longrightarrow\ u_1\leq u_2\hbox{ a.e. in }\Om.$$
\end{Thm}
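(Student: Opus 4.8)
The plan is to prove \refT{Thunic}. by approximation, combining the classical $L^1$-contraction technique of Bénilan–Brezis–Crandall–type arguments with the stability statements of \refT{Th2ex}. so the reader should expect essentially two ingredients: a contraction estimate obtained by testing a suitably truncated equation with $\sgn(u_1-u_2)$ (more precisely, with $T_k(u_1-u_2)/k$ and letting $k\to0$), and then a bootstrapping of that $L^1$ bound on $w_1-w_2$ into the gradient estimates for $u_1-u_2$ exactly as in \eqref{acotuT2}.

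First I would set $u=u_1-u_2$, $w=w_1-w_2$, $f=f_1-f_2$. The difficulty is that $u$ itself is not an admissible test function (it need not be bounded, nor have truncated gradient), so one cannot directly write $\int\nabla u\cdot\nabla v+\int wv=\int fv$ with $v=u$. The standard device is to use $v=\frac1k T_k(T_m(u_1)-T_m(u_2))$ for $m,k>0$: this is bounded, lies in $H^1_0(\Om)$, and has gradient supported where $|u_i|\le m$, so it is admissible in \eqref{Ecv} for both equations (after first passing to a common truncation level as in the renormalized-solution formalism; this is where I expect the main technical care). Subtracting the two weak formulations gives
\[
\frac1k\into\nabla(u_1-u_2)\cdot\nabla T_k\big(T_m(u_1)-T_m(u_2)\big)\,dx+\frac1k\into w\,T_k\big(T_m(u_1)-T_m(u_2)\big)\,dx=\frac1k\into f\,T_k\big(T_m(u_1)-T_m(u_2)\big)\,dx.
\]
On the region $\{|u_1|\le m,\ |u_2|\le m\}$ the first integrand equals $\frac1k\nabla(u_1-u_2)\cdot\nabla T_k(u_1-u_2)\ge0$; one checks the contribution of the remaining region is non-negative as well (monotonicity of $T_m$ makes the cross terms have the right sign), so the gradient term is $\ge0$ and can be dropped. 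Letting $m\to\infty$ and then $k\to0$, and using $|T_k(s)/k|\le1$ together with $w\,\sgn(u)\ge0$ a.e.\ (which follows from \eqref{acsoRwT2} and monotonicity of $g$: where $u_1>u_2$ one has $w_1=w(u_1)\ge w(u_2)=w_2$ in the appropriate one-sided sense), one arrives at $\|w_1-w_2\|_{L^1}\le\|f_1-f_2\|_{L^1}$, which is \eqref{estdw}. Keeping instead the gradient term before discarding it, with test function $\frac1k T_k(u_1-u_2)$ directly (legitimate once we know $w\in L^1$ so the equation extends to such test functions), yields $\|T_k(u_1-u_2)\|_{H^1_0}^2\le k\into|f_1-f_2|\,|T_k(u_1-u_2)|/k\cdot\ldots\le k\|f_1-f_2\|_{L^1}$, giving \eqref{estdu1}.

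For \eqref{estdu2}, note that $u=u_1-u_2$ solves, in the sense of \refR{Retex2}., $-\Delta u=f-w$ with $f-w\in L^1(\Om)$ and $\|f-w\|_{L^1}\le2\|f_1-f_2\|_{L^1}$ by \eqref{estdw}; the $W^{1,p}_0$ (resp.\ $H^1_0$ for $N=1$) estimate is then exactly the linear Stampacchia/renormalized estimate already invoked in \refT{Th2ex}., applied to this linear equation, so the constant has the stated dependence. Finally, for the comparison principle, suppose $f_1\le f_2$ a.e.; test the difference of the two equations with $v=\frac1k T_k(u^+)$ where $u^+=(u_1-u_2)^+$ (again via the double-truncation regularization to make it admissible). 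The gradient term gives $\frac1k\int_{\{0<u<k\}}|\nabla u|^2\ge0$, the potential term gives $\frac1k\int w\,T_k(u^+)$, and on $\{u_1>u_2\}$ monotonicity of $g$ forces $w=w_1-w_2\ge0$ (comparing the admissible values $w_i\in[g_-(u_i),g_+(u_i)]$), so that term is $\ge0$; the right-hand side $\frac1k\int(f_1-f_2)T_k(u^+)\le0$. Hence $0\le$ (non-negative terms) $\le0$, forcing $\nabla u^+=0$ and $u^+\in W^{1,p}_0(\Om)$, so $u^+\equiv0$, i.e.\ $u_1\le u_2$.

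The main obstacle is entirely in the first ingredient: justifying that the double-truncated functions are legitimate test functions for renormalized solutions and that the passages $m\to\infty$, $k\to0$ are valid (controlling the "energy defect" term supported on $\{m<|u_i|<2m\}$ via $\|\nabla T_{2m}(u_i)\|^2$ type bounds from \eqref{acotuT2}, which tend to give a contribution $o(1)$ as $m\to\infty$). Once the $L^1$-contraction \eqref{estdw} is in hand, everything else is either a repetition of the estimates in \refT{Th2ex}. applied to the linear equation for $u_1-u_2$, or the same truncation test with $u^+$ in place of $|u|$. I would present the contraction argument in detail and then say the remaining estimates follow "as in the proof of \refT{Th2ex}." and "arguing with $T_k(u^+)$ in place of $T_k(u)$" for the comparison principle.
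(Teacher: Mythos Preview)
Your overall strategy is exactly the paper's, and the handling of \eqref{estdu2} and of the comparison principle is essentially identical to what the paper does. The one genuine gap is in the choice of test function for the contraction step.

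You propose $v=T_k\big(T_m(u_1)-T_m(u_2)\big)$ and assert that ``it has gradient supported where $|u_i|\le m$'', so that it is admissible in \eqref{Ecv}. This is not correct: on $\{|u_1|>m,\ |u_2|<m\}$ one has $\nabla T_m(u_1)=0$ but $\nabla T_m(u_2)=\nabla u_2$, so
\[
\nabla v \;=\; -\,T_k'\big(T_m(u_1)-T_m(u_2)\big)\,\nabla u_2,
\]
which need not vanish. Hence there is no level $k'$ with $\nabla v=0$ a.e.\ on $\{|u_1|>k'\}$, and $v$ is not an admissible test function for the $u_1$--equation (nor, by symmetry, for the $u_2$--equation). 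For the same reason, your subsequent claim that on the ``remaining region'' the gradient term keeps a good sign cannot be justified: with your $v$, the cross terms $\nabla(u_1-u_2)\cdot\nabla u_2$ on $\{|u_1|>m\}$ have no reason to be non-negative.

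The paper fixes this by taking instead
\[
v \;=\; S_m(u_1)\,S_m(u_2)\,T_k(u_1-u_2)^+,
\]
which vanishes identically (not just its ``truncated'' part) on $\{|u_1|>2m\}\cup\{|u_2|>2m\}$, and is therefore admissible in \eqref{Ecv} for both equations simultaneously. Expanding $\nabla v$ by the product rule produces, besides the main term $\int_{\{0<u_1-u_2<k\}}|\nabla(u_1-u_2)|^2\,S_m(u_1)S_m(u_2)$ and the potential term, two cross terms supported on $\{m<|u_i|<2m\}$. These are \emph{not} handled by a sign argument; instead they are shown to be $o(1)$ as $m\to\infty$ using the energy decay
\[
\frac{1}{m}\int_{\{|u_i|<2m\}}|\nabla u_i|^2\,dx\;\longrightarrow\;0,
\]
which follows from the second estimate in \eqref{acotuT2}. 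This is precisely the ``energy defect'' control you anticipate at the end of your proposal, but it has to be paired with the $S_m$--cutoff test function, not with the double truncation $T_k(T_m u_1-T_m u_2)$. Once you pass to the limit $m\to\infty$ you obtain the clean identity
\[
\int_{\{0<u_1-u_2<k\}}\hskip-10pt|\nabla(u_1-u_2)|^2\,dx+\into(w_1-w_2)\,T_k(u_1-u_2)^+\,dx=\into(f_1-f_2)\,T_k(u_1-u_2)^+\,dx,
\]
from which \eqref{estdu1} (dropping the non-negative $w$--term and symmetrising) and \eqref{estdw} (dividing by $k$ and letting $k\to0$) follow exactly as you outline. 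The comparison principle also drops out of this same identity when $f_1\le f_2$, with no further test function needed.
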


Our main result in this section proves that the function $w$ in Theorem \ref{Th2ex} is in $BV(\Om)$ when $f$ is in $BV(\Om)$. It will be used in Theorem \ref{Topr} to deduce some regularity results for the solution of a control problem governed by an elliptic equation, where the control variable corresponds to the coefficients of the zero order's term.

\begin{Thm}\label{Threg}
Let $\Om\subset\RR^N$ be a bounded open set of class $C^{1,1}$, and $g:\RR\to\RR$ a non-decreasing function. Then, for $f\in BV(\Om),$ the solution $(u,w)$ of \eqref{ecto0} is in $W^{2,{N\over N-1}}(\Om)\times BV(\Om)$. Moreover, there exists $C>0$ depending only on $\Om$ such that
\begin{align}
&\|u\|_{W^{2,{N\over N-1}}(\Om)}\leq C\big(\|f\|_{BV(\Om)}+|g_+(0)|\big),\label{acouTBV}\\
&\|\nabla w\|_{{\cal M}(\Om)^N}\leq C\big(\|f\|_{BV(\Om)}+|g_+(0)|).\label{acowTBV}
\end{align}
\end{Thm}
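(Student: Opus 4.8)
The plan is to prove \refT{Threg}. by approximating $f \in BV(\Om)$ and exploiting the translation estimates already available in \refT{Thunic}., together with classical $W^{2,p}$ regularity for the Laplacian on $C^{1,1}$ domains. The key observation is that the relevant equation is $-\Delta u + w = f$ with $g_-(u) \le w \le g_+(u)$, and the only nonlinear object whose variation we must control is $w$; once $\|\nabla w\|_{\M(\Om)^N}$ is bounded, the bound on $u$ in $W^{2,N/(N-1)}(\Om)$ follows from applying the Calder\'on–Zygmund / Agmon–Douglis–Nirenberg estimate to $-\Delta u = f - w \in L^{N/(N-1)}(\Om)$ together with \eqref{acotuT2}. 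So the heart of the matter is \eqref{acowTBV}.

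First I would establish the translation estimate for $w$. Fix a direction $e \in \RR^N$ with $|e|=1$ and a small $h>0$. Formally, $(u(\cdot + he), w(\cdot + he))$ solves \eqref{ecto0b} on the translated domain with right-hand side $f(\cdot + he)$; the difficulty is that $\Om$ is not translation invariant, so one cannot directly quote \eqref{estdw}. The standard device is to work on a slightly shrunken open set $\Om_h = \{x \in \Om : \dist(x,\partial\Om) > h\}$, or better, to extend $f$ to a $BV$ function $\tilde f$ on $\RR^N$ with $\|\tilde f\|_{BV(\RR^N)} \le C\|f\|_{BV(\Om)}$ (possible since $\Om$ is $C^{1,1}$, hence an extension domain) and to use a comparison/truncation argument near the boundary. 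The cleanest route is an \emph{a priori} argument: approximate $g$ by smooth strictly increasing $g_n$ and $f$ by smooth $f_n \to f$ in $L^1$ with $\|f_n\|_{W^{1,1}} \to \|f\|_{BV}$; for the regularized problems $u_n \in H^2$, so we may differentiate. Testing the equation $-\Delta u_n + g_n(u_n) = f_n$ with $\partial_e(\beta_\de(g_n(u_n)))$-type test functions — i.e. mollified approximations of $\sgn(\partial_e g_n(u_n))$ cut off away from $\partial\Om$ — and using that $g_n$ is nondecreasing so that $\partial_e g_n(u_n) \cdot \partial_e u_n = g_n'(u_n)|\partial_e u_n|^2 \ge 0$, one gets
\beq\label{planineq}
\int_{\Om'} |\nabla \partial_e(g_n(u_n))| \le \int_{\Om} |\nabla \partial_e f_n| + (\text{boundary terms}),
\eeq
for $\Om' \Subset \Om$, which after summing over $e$ and letting $\de \to 0$ controls $\|\nabla g_n(u_n)\|_{\M(\Om')}$ by $\|f_n\|_{BV}$ plus boundary contributions.

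The boundary terms are the main obstacle. Near $\partial\Om$ one cannot translate, so the differentiated equation must be handled by genuine elliptic boundary regularity rather than by the translation trick. Here I would use the $C^{1,1}$ hypothesis: flatten the boundary locally, use that $u_n$ vanishes on $\partial\Om$ (so tangential derivatives of $u_n$ vanish there), and combine the global $W^{2,N/(N-1)}$ estimate for $-\Delta u_n = f_n - g_n(u_n)$ (valid since $f_n - g_n(u_n)$ is bounded in $L^{N/(N-1)}$ by \eqref{acowT2}-type bounds) with an estimate for the normal derivative. Concretely, one writes $w_n = f_n + \Delta u_n$ and estimates $\|w_n\|_{BV(\Om)}$ by testing against vector fields $\Phi \in C^1_c(\Om;\RR^N)$: $\int w_n \div \Phi = \int f_n \div \Phi + \int \Delta u_n \div \Phi$, integrate the last term by parts twice to get $\int \nabla u_n \cdot \nabla(\div\Phi)$, then use $\|u_n\|_{W^{2,N/(N-1)}} \le C(\|f_n\|_{L^{N/(N-1)}} + \|w_n\|_{L^{N/(N-1)}})$ — but this only gives second derivatives, not a true $BV$ bound on $w_n$, so one must instead bootstrap: the pointwise identity $w_n = g_n(u_n)$ with $u_n \in W^{2,N/(N-1)} \hookrightarrow W^{1,N/(N-2)}$ and $g_n$ Lipschitz of constant $L_n$ gives $|\nabla w_n| \le L_n |\nabla u_n|$, which blows up. This is why the \emph{monotone test function} approach of \eqref{planineq} — which uses only that $g_n$ is monotone, not its Lipschitz constant — is essential, and the boundary terms there are controlled because $\partial_\nu(g_n(u_n)) = g_n'(u_n)\partial_\nu u_n$ has a sign relation to $\partial_\nu u_n$ which the Hopf-type structure keeps bounded by $\|f\|_{BV}$.

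Finally I would pass to the limit: from \eqref{planineq} (now globally, with boundary terms absorbed) we get $\{g_n(u_n)\}$ bounded in $BV(\Om)$, hence a subsequence converges in $L^1(\Om)$ and weakly-$*$ in $BV$; by \refT{Thunic}. (stability, $L^1$ continuous dependence) $(u_n, g_n(u_n)) \to (u,w)$ with $(u,w)$ the unique solution of \eqref{ecto0}, so $w \in BV(\Om)$ with $\|\nabla w\|_{\M} \le \liminf \|\nabla g_n(u_n)\|_{\M} \le C(\|f\|_{BV} + |g_+(0)|)$, giving \eqref{acowTBV}. Then $-\Delta u = f - w \in L^{N/(N-1)}(\Om)$ with norm bounded by $\|f\|_{BV} + |g_+(0)|$ (using the embedding $BV(\Om) \hookrightarrow L^{N/(N-1)}(\Om)$ for both $f$ and $w$), and since $\Om$ is $C^{1,1}$ the Agmon–Douglis–Nirenberg estimate gives $u \in W^{2,N/(N-1)}(\Om)$ with \eqref{acouTBV}. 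The care needed is that $N/(N-1) > 1$, so we are strictly in the range where $L^p$ elliptic regularity applies, and that the constant tracking keeps everything dependent only on $\Om$.
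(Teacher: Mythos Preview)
Your overall strategy matches the paper's: approximate $g$ by smooth nondecreasing $g_n$ and $f$ by $f_n\in W^{1,1}\cap L^2$, differentiate the regularized equation, exploit the monotonicity of $g_n$ to produce $\int g_n'(u_n)|\nabla u_n|=\int|\nabla g_n(u_n)|$ on the good side, then pass to the limit. You also correctly single out the boundary terms as the main obstacle. But there are two concrete problems. First, your displayed inequality has one derivative too many on each side: what the argument actually delivers is a bound on $\int_\Om|\partial_e g_n(u_n)|$ by $\int_\Om|\partial_e f_n|$ plus boundary contributions, not a bound on $\int|\nabla\partial_e(\cdot)|$; and the description of the test function is muddled, since $\partial_e(\beta_\delta(g_n(u_n)))$ is not an approximation of $\sgn(\partial_e g_n(u_n))$.

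The substantive gap is the boundary control, which you identify as the crux but do not resolve. Your suggestions are either circular --- invoking the $W^{2,N/(N-1)}$ estimate for $-\Delta u_n=f_n-g_n(u_n)$ requires an $L^{N/(N-1)}$ bound on $g_n(u_n)$ independent of the Lipschitz constant of $g_n$, but \eqref{acowT2} only gives $L^1$, and the $L^{N/(N-1)}$ bound is exactly what you are after --- or too vague (the ``Hopf-type sign relation'' remark does not yield an inequality). The paper's argument is specific here: one tests the differentiated equations with $\partial_i u/(|\nabla u|+\ep)$, sums over $i$, and lets $\ep\to0$. The Laplacian contribution becomes the nonnegative tangential-Hessian term, the zero-order term becomes $\int g'(u)|\nabla u|$, and the boundary contribution is computed explicitly via the identities \eqref{edsg} (available because $\Om\in C^{1,1}$ and $u=0$ on $\partial\Om$), reducing it to $\int_{\partial\Om}(|f|+|h||\nabla u|)$. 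The $f$ part is handled by the $W^{1,1}$ trace theorem; the $\nabla u$ part is handled by an absorption trick: Ehrling's lemma (compact embedding $W^{1,1}\hookrightarrow L^p$, $p<N/(N-1)$) combined with elliptic $W^{2,p}$ regularity and the trace theorem yields $\|\nabla u\|_{L^1(\partial\Om)}\le\delta\|\nabla g(u)\|_{L^1(\Om)}+C_\delta(\|f\|_{W^{1,1}}+|g(0)|)$ for arbitrary $\delta>0$, which is then absorbed into the left-hand side of \eqref{e1Thre}. This absorption step is the missing idea in your plan; without it the boundary terms remain uncontrolled and the argument does not close.
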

\par\medskip
Let us now prove Theorems \ref{Th2ex}, \ref{Thunic} and \ref{Threg}.

\begin{proof}[Proof of Theorem \ref{Th2ex}]
When $g$ is a continuous function, the result is well known from the theory of renormalized solutions for elliptic PDE (see for instance \cite{BeBoGaGaPiVa}, \cite{BocGal}, \cite{DMMuOrPr}). We recall how the corresponding estimates are obtained.

Taking $T_k(u)$, with $k>0$ as test function in (\ref{Ecv}) we have
\beq\label{e1Th2}
\int_{\{|u|<k\}}\hskip-6pt|\nabla u|^2\,dx+\into\big(g(u)-g(0)\big)T_k(u)\,dx=\into (f-g(0))T_k(u)\,dx.
\eeq
Thanks to the fact that $g$ is non-decreasing, we have $(g(u)-g(0))T_k(u)\geq 0$ a.e. in $\Om$ and this gives the second estimate in \eqref{acotuT2}. From this inequality, using the argument in Theorem 1 of \cite{BocGal}, we conclude that $u$ satisfies (\ref{acotuT2}).

Dividing by $k$ in (\ref{e1Th2}) and taking the limit as $k\to0$ we get
$$\lim_{k\to 0}{1\over k}\int_{\{|u|<k\}}\hskip-6pt |\nabla u|^2dx+\into |g(u)-g(0)|dx=\into (f-g(0))\sgn(u)\,dx.$$
This proves \eqref{acowT2} with $w=g(u)$.

Let us now prove the existence of solution for (\ref{ecto0}) in the case of $g$ just non-decreasing. For this purpose we replace $g$ by $g_n=g\ast \rho_n$, $n\in \NN$, with $\rho_n$ a sequence of mollifiers functions defined as
$$\rho_n(s)=n\rho\big(ns)\qquad\forall\,s\in\RR,$$
with 
$$\rho\in C^\infty(\RR),\quad \rho\geq 0\ \hbox{ in }\RR,\quad {\rm support}(\rho)\subset (-1,0),\quad \int_\RR \rho(s)ds=1.$$
Then, $g_n$ satisfies
\beq\label{e1bTh2}g_n\in C^\infty(\RR),\quad g_n\hbox{ is non-decreasing},\quad g(s)\leq g_n(s)\leq g\Big(s+{1\over n}\Big)\quad\forall\,s\in\RR.\eeq
Taking $u_n$ the solution of (\ref{ecto0b}) for $g=g_n$, and using estimates (\ref{acowT2}) and (\ref{acotuT2}), we deduce the existence of a subsequence of $n$, still denoted by $n$, and a function $u$ such that
\begin{align}
&u_n\rightharpoonup u\ \hbox{ in }W^{1,p}_0(\Om)\quad\forall\,p<{N\over N-1}\quad(p=2\hbox{ if }N=1),\label{e2Th2}\\
&T_k(u_n)\rightharpoonup T_k(u)\ \hbox{ in }H^1_0(\Om)\quad\forall\,k>0,\label{e3Th2}\\
&\into |g_n(u_n)-g_n(0)|\,dx\le\into|f-g_n(0)|\,dx.\label{e4Th2}
\end{align}
From \eqref{e1Th2} we also have
$$\lim_{n\to\infty}\left(\int_{\{|u_n|<k\}}\hskip-6pt|\nabla u_n|^2\,dx+\into\big(g_n(u_n)-g_n(0)\big)T_k(u_n)\,dx\right)=\into(f-g_+(0))T_k(u)\,dx,$$
for every $k>0$. Dividing by $k$ and taking the limit as $k\to\infty$, gives
\beq\label{e5Th2} \lim_{k\to\infty}\lim_{n\to\infty}\left({1\over k}\int_{\{|u_n|<k\}} \hskip-7pt|\nabla u_n|^2dx+\int_{\{|u_n|>k\}} \hskip-5pt\big|g_n(u_n)|dx\right)=0.\eeq
Let us prove that $g_n(u_n)$ is compact in the weak topology of $L^1(\Om)$. By (\ref{e1bTh2}), (\ref{e4Th2}) and the Dunford-Pettis theorem, it is enough to prove that $g_n(u_n)$ is equi-integrable, i.e. that for every $\ep>0$, there exists $\delta>0$ such that
$$\int_E|g_n(u_n)|dx\le\ep\qquad\forall\,E\subset\Om,\hbox{ measurable, with }|E|<\delta.$$
For such $\ep$, thanks to (\ref{e5Th2}), there exist $k,m>0$ such that 
$$\int_{\{|u_n|>k\}}\hskip-5pt \big|g_n(u_n))|dx<{\ep\over 2}\quad\forall\, n\geq m.$$
Choosing then 
$$\delta_1 <{\ep \over 2\sup_{[-k,k+1]}|g(s)|},$$
and taking into account (\ref{e1bTh2}), we deduce that for every $E\subset\Om$, measurable with $|E|<\delta$, we have
$$\int_E|g_n(u_n)|dx\leq \int_E \,\sup_{[-k,k+1]}|g(s)| dx+\int_{\{|u_n|>k\}}\hskip-8pt |g_n(u_n)|dx<\ep\quad\forall\, n\geq m.$$
On the other hand, since every finite subset of functions in $L^1(\Om)$ is equi-integrable, there exists $\delta_2>0$ such that
$$\int_E|g_n(u_n)|dx< \ep\quad\forall\, n< m.$$
Thus, taking $\delta=\min\{\delta_1,\delta_2\}$ we deduce the equi-integrability of $g_n(u_n)$. Extracting a subsequence if necessary, we then deduce the existence of $w\in L^1(\Om)$ such that
\beq\label{e6Th2} g_n(u_n)\rightharpoonup w\ \hbox{ in }L^1(\Om).\eeq
From (\ref{e1bTh2}), (\ref{e2Th2}) and the Rellich-Kondrachov compactness theorem, we also have
$$g_-(u)\leq w\leq g_+(u)\quad\hbox{ a.e. in }\Om.$$
Let us prove that $(u,w)$ satisfies (\ref{Ecv}). We take $v\in H^1_0(\Om)\cap L^\infty(\Om)$ such that there exists $k>0$ with $\nabla v=0$ a.e. in $\{|u|>k\}.$ For $m>0$, we use $S_m(u_n)v$ as test function in the equation for $u_n$. We get
\[\begin{split}
&\dis -{1\over m}\int_{\{m<|u_n|<2m\}}\hskip-12pt |\nabla u_n|^2\sgn(u_n)v\,dx+\into \nabla u_n\cdot\nabla v\, S_m(u_n)\,dx\\
&\qquad\qquad\dis +\into g_n(u_n)S_m(u_n)v\,dx=\into fS_m(u_n)v\,dx.
\end{split}\]
Taking into account \eqref{e3Th2}, \eqref{e6Th2} and the fact that $S_m(u_n)v$ is bounded in $L^\infty(\Om)$ and converges in measure to $S_m(u)v$, we can pass to the limit as $n\to\infty$ in this inequality to get
$$\ba{l}\dis \left|\into \nabla u\cdot\nabla v\, S_m(u)\,dx+\into wS_m(u)v\,dx-\into fS_m(u)v\,dx\right|\\ \ecart\dis \leq {\|v\|_{L^\infty(\Om)}\over m}\limsup_{n\to \infty}\int_{\{m<|u_n|<2m\}}\hskip-18pt |\nabla u_n|^2dx.\ea$$
By \eqref{e5Th2}, $T_k(u)\in H^1_0(\Om)$ and $\nabla v=0$ a.e. in $\{|u|>k\}$, we can now pass to the limit as $m\to\infty$ to deduce that \eqref{Ecv} holds.\par
The uniqueness of solutions for \eqref{ecto0} follows from Theorem \ref{Thunic}.
\end{proof}

\begin{proof}[Proof of Theorem \ref{Thunic}] Let us assume $N\ge2$. The case $N=1$ is simpler taking into account the continuous imbedding of $L^1(\Om)$ into $H^{-1}(\Om)$.\par
For $m,k>0$, we take $S_m(u_1)S_m(u_2)T_k(u_1-u_2)^+$ as test function in the difference of the equations satisfied by $(u_1,w_1)$ and $(u_2,w_2)$. This gives
\beq\label{e1T3}\ba{l}\dis \int_{\{0<u_1-u_2<k\}}\hskip-12pt |\nabla (u_1-u_2)|^2S_m(u_1)S_m(u_2)dx\\ \ecart\dis
+\into (w_1-w_2)T_k(u_1-u_2)^+S_m(u_1)S_m(u_2)\,dx\\ \ecart\dis
-{1\over m}\int_{\{m<|u_2|<2m\}} \hskip-16ptT_k(u_1-u_2)^+\sgn(u_2)S_m(u_1)\nabla (u_1-u_2)\cdot\nabla u_2\,dx \\ \ecart\dis
-{1\over m}\int_{\{m<|u_1|<2m\}}\hskip-16pt T_k(u_1-u_2)^+\sgn(u_1)S_m(u_2)\nabla (u_1-u_2)\cdot\nabla u_1\,dx\\ \ecart\dis=\into (f_1-f_2)\,T_k(u_1-u_2)^+S_m(u_1)S_m(u_2)\,dx.\ea\eeq
Here we use the second estimate in (\ref{acotuT2}) for $u=u_1$, $u=u_2$, with $k=m$, which dividing by $m$ and taking the limit as $m\to\infty$, gives
$$\lim_{m\to\infty}{1\over m}\int_{\{|u_1 |<2m\}}\hskip-12pt|\nabla u_1|^2dx={1\over m}\lim_{m\to\infty}\int_{\{|u_2 |<2m\}}\hskip-12pt|\nabla u_2|^2dx=0.$$
This allows us to pass to the limit in \eqref{e1T3}, as $m\to\infty$, to deduce
\beq\label{e2T3}\ba{l}\dis \int_{\{0<u_1-u_2<k\}}\hskip-16pt |\nabla (u_1-u_2)|^2dx+\into (w_1-w_2)T_k(u_1-u_2)^+dx\\
\ecart\dis =\into (f_1-f_2)T_k(u_1-u_2)^+dx,\qquad\forall\, k>0.\ea\eeq
Now, we observe that the conditions
$$g_-(u_1)\le w_1\le g_+(u_1),\ \ g_-(u_2)\le w_2\le g_+(u_2)\qquad\hbox{ a.e in }\Om,$$
imply
$$(w_1-w_2)T_k(u_1-u_2)^+\geq 0\qquad\hbox{a.e. in }\Om.$$
Therefore (\ref{e2T3}) proves
$$\int_{\{0<u_1-u_2<k\}}\hskip-16pt |\nabla (u_1-u_2)|^2\,dx\le k\int_{\{0<u_1-u_2<k\}}\hskip-16pt|f_1-f_2|\,dx.$$
Adding the analogous inequality with $u_1,u_2$ replaced by each other, we deduce \eqref{estdu1}. This inequality also implies \eqref{estdu2} (see \cite{BocGal}).

Dividing by $k$ in \eqref{e2T3} and passing to the limit as $k\to0$, we get
$$\lim_{k\to 0}{1\over k}\int_{\{0<u_1-u_2<k\}}\hskip-16pt|\nabla (u_1-u_2)|^2\,dx+\int_{\{u_2<u_1\}}\hskip-12pt|w_1-w_2|\,dx=\int_{\{u_2<u_1\}}\hskip-12pt (f_1-f_2)\,dx.$$
Using the analogous equality with $u_1,u_2$ replaced by each other we conclude \eqref{estdw}.

If $f_1\le f_2$ a.e. in $\Om$, then \eqref{e2T3} proves
$$\int_{\{0<u_1-u_2<k\}}\hskip-16pt|\nabla (u_1-u_2)|^2\,dx=0\qquad\forall\,k>0,$$
and then that $u_1\le u_2$ a.e. in $\Om$.
\end{proof}

\begin{proof} [Proof of Theorem \ref{Threg}] Let us first assume $g$ in $W^{1,\infty}(\RR)$, $f\in W^{1,1}(\Om)\cap L^2(\Om)$. Then $u$ belongs to $H^2(\Om)$. Taking into account the boundary condition $u=0$ on $\partial\Om$, and $\Om\in C^{1,1}$, we can use equation (4.19) in the proof of Lemma 4.3 in \cite{CaCoVa} to prove that the second derivatives of $u$ satisfy
\beq\label{edsg}
\begin{cases}
-\Delta\partial_i u+g'(u)\partial_i u=\partial_i f&\hbox{in }\Om,\quad1\le i\le N\\
\nabla u=|\nabla u|s\nu&\hbox{on }\partial\Om\\
-D^2u\,\nu\cdot\nu+g(0)=f+h\cdot\nabla u&\hbox{on }\partial\Om,
\end{cases}\eeq
where $\nu$ denotes the unitary outside normal to $\Om$, $h,s$ satisfy
$$h\in L^\infty(\partial\Om)^N,\quad s\in L^\infty(\partial\Om),\ s\in \{-1,1\},\hbox{ a.e. in }\Om.$$
and they only depend on $\Om$.

For $\ep>0$, we multiply the first equation in \eqref{edsg} by
$${\partial_i u\over |\nabla u|+\ep}\in H^1(\Om)\cap L^\infty(\Om).$$
Integrating by parts, adding in $i$, and using the boundary conditions in \eqref{edsg}, we get
$$\ba{l}\dis\into\Big({|D^2u|^2\over|\nabla u|+\ep}-{|D^2u\nabla u|^2\over|\nabla u|(|\nabla u|+\ep)^2}\Big)\,dx+\into g'(u){|\nabla u|^2\over |\nabla u|+\ep}\,dx\\
\ecart\dis=-\int_{\partial\Om}s\big(f+h\cdot\nabla u-g(0)\big){|\nabla u|\over|\nabla u|+\ep}\,ds(x)+\into{\nabla f\cdot\nabla u\over|\nabla u|+\ep}\,dx.\ea$$
Using here that
$$|D^2 u|^2-{|D^2 u\nabla u|^2\over|\nabla u|(|\nabla u|+\ep)}\geq |D^2 u|^2-{|D^2 u\nabla u|^2\over|\nabla u|^2}=\Big|D^2 u\Big(I-{\nabla u\otimes \nabla u\over |\nabla u|^2}\Big)\Big|^2\geq 0,$$
a.e. in $\Om$, we can pass to the limit as $\ep\to0$ to deduce
\beq\label{e1Thre}\ba{l}\dis\into{1\over|\nabla u|}\Big(|D^2u|^2-{|D^2u\nabla u|^2\over|\nabla u|^2}\Big)\,dx+\into g'(u)|\nabla u|\,dx\\
\ecart\dis=-\int_{\partial\Om}s\big(f+h\cdot\nabla u-g(0)\big)\,d\sigma(x)+\into{\nabla f\cdot\nabla u\over|\nabla u|}\,dx.\ea\eeq
In the first term of the right-hand side, we can apply the trace theorem for functions in $W^{1,1}(\Om)$ which proves the existence of $C$ depending only on $\Om$ such that
\beq\label{e2Thre}\|f\|_{L^1(\partial\Om)}\leq C\|\nabla f\|_{L^1(\Om)^N}.\eeq
Using also that the imbedding of $W^{1,1}(\Om)$ into $L^p(\Om)$ is compact for $1<p<{N\over N-1}$, we deduce that for every $\delta>0$, there exists $C>0$ depending on $\Om$, $p$ and $\delta$, such that
$$\|\Delta u\|_{L^p(\Om)}\leq \delta \big(\|\nabla g(u)\|_{L^1(\Om)^N}+\|\nabla f\|_{L^1(\Om)^N}\big)+C\big(\|g(u)\|_{L^1\Om)}+\|f\|_{L^1(\Om)}\big).$$
Applying then that $(-\Delta)^{-1}$ is continuous from $L^p(\Om)$ into $W^{1,p}_0(\Om)\cap W^{2,p}(\Om)$, and the trace theorem for Sobolev spaces, we conclude that, for another constant $C>0$, we have
$$\|\nabla u\|_{L^1(\partial\Om)^N}\le\delta \big(\|\nabla g(u)\|_{L^1(\Om)^N}+\|\nabla f\|_{L^1(\Om)^N}\big)+C\big(\|g(u)\|_{L^1\Om)}+\|f\|_{L^1(\Om)}\big),$$
which combined with (\ref{acowT2}), with $w=g(u)$, proves
\beq\label{e3Thre}\|\nabla u\|_{L^1(\partial\Om)^N}\le\delta\|\nabla g(u)\|_{L^1(\Om)^N}+C\big(\|f\|_{W^{1,1}(\Om)}+|g(0)|),\eeq
with $C$ depending on $\Om$ and $\delta$.

Choosing $\delta$ such that $\delta\|h\|_{L^\infty(\partial\Om)}<1$, we can use \eqref{e2Thre} and \eqref{e3Thre} in \eqref{e1Thre}, to conclude the existence of $C>0$ depending only on $\Om$ such that \eqref{acowTBV} holds. From this estimate, the continuous imbedding of $W^{1,1}(\Om)$ into $L^{N\over N-1}(\Om)$, and $u$ solution of (\ref{ecto0}), with $w=g(u)$, we also have that $u$ satisfies \eqref{acouTBV}. This proves the result for $g\in W^{1,\infty}(\RR)$, $f\in W^{1,1}(\Om)\cap L^2(\Om)$.

The case where $g$ is just an increasing function follows by approximating $g$ by a sequence of smooth functions $g_n$ as in the proof of Theorem \ref{Th2ex}.

The case where $f$ is just in $BV(\Om)$ follows by replacing $f$ by a sequence $f_n$ in $W^{1,1}(\Om)\cap L^2(\Om)$, such that $f_n$ converges to $f$ in $L^1(\Om)$ and
$\|\nabla f_n\|_{L^1(\Om)^N}$ converges to $\|\nabla f\|_{{\cal M}(\Om)^N}$.
\end{proof}

\section{Applications to optimal potentials problems}\label{S2}

In this section, we are interested in the study of an optimal control problem for an elliptic equation of a Schr\"odinger type, where the control variable is the potential. Namely, we consider the problem
\beq\label{pbco}\begin{split}
&\dis\min\into\big(j(x,u)+\psi(m)\big)\,dx\\
&\begin{cases}
-\Delta u+m\,u=f\ \hbox{ in }\Om\\
u\in H^1_0(\Om),\quad m\in L^1(\Om),\quad m\ge0\ \hbox{ a.e. in }\Om,
\end{cases}\end{split}\eeq
where $\psi$ is a lower semicontinuous convex function. The constraint $m\ge0$ in $\Om$ is introduced in the cost functional taking
$$\psi(s)=+\infty\qquad\forall\, s<0.$$
Problems of this type intervene in several shape optimization problems, where $m$ is a Borel measure of capacitary type, not Radon in general (see for instance \cite{bebuve}, \cite{bucur}, \cite{bubu05}, \cite{bubuve}, \cite{CaCoVa}, \cite{bdm91}, \cite{bdm93}, \cite{bmv18}, \cite{buve17}, \cite{CFL} and \cite{mazpra}). Similarly to these papers, the solution $u$ of \eqref{pbco} must be understood in the variational sense
$$\begin{cases}
u\in H^1_0(\Om)\cap L^2_m(\Om)\\
\dis\into\big(\nabla u\cdot\nabla v +muv\big)\,dx=\langle f,v\rangle\quad\forall\, v\in H^1_0(\Om)\cap L^2_m(\Om).
\end{cases}$$

In the present paper, we are interested in obtaining some regularity results for the optimal controls $\hat m$, which in our case are integrable functions. In particular, we show that in several cases the optimal controls $\hat m$ are of bang-bang type, and then discontinuous. However, we show that, under some suitable assumptions, $\hat m$ is a $BV$ function and then that the interfaces have finite perimeter.

Our first result proves the existence of solution for \eqref{pbco}. We refer to Theorem 2.19 in \cite{CaCoVa} for a related result in a more general setting.

\begin{Thm}\label{ExCoo} Let $\Om\subset\RR^N$ be a bounded open set, $j:\Om\times\RR\to\RR$ measurable in the first component and lower semicontinuous in the second one, satisfying \eqref{condj}, and $\psi:\RR\to[0,\infty]$ a convex lower semicontinuous function with $\dom(\psi)\subset[0,\infty)$, such that \eqref{psisl} holds. Then, for every $f\in H^{-1}(\Om)$, problem \eqref{pbco} has a least one solution $\hat m\in L^1(\Om)$.
\end{Thm}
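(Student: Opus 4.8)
The plan is to apply the direct method of the calculus of variations. First I would fix a minimizing sequence $(u_n,m_n)$: since $j$ is bounded below by $a(x)-c|s|^2$ and $\psi\ge0$, the functional $F(u,m)=J(u)+\Psi(m)$ is not obviously bounded below a priori, so the first task is to produce an a priori bound. For this I would use the state equation: testing $-\Delta u_n+m_nu_n=f$ with $u_n$ gives $\int_\Om|\nabla u_n|^2\,dx+\int_\Om m_nu_n^2\,dx=\langle f,u_n\rangle\le\|f\|_{H^{-1}}\|u_n\|_{H^1_0}$, hence $\|u_n\|_{H^1_0(\Om)}\le\|f\|_{H^{-1}(\Om)}$. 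In particular the competitor norms $\|u_n\|_{H^1_0}$ are bounded independently of $m_n$, so $\int_\Om j(x,u_n)\,dx\ge\int_\Om a\,dx-c\|u_n\|_{L^2}^2$ is bounded below, which forces $\Psi(m_n)$ to be bounded along the minimizing sequence (assuming of course the infimum is finite, which it is since one may test with a bounded $m$ in $\dom(\psi)$; if $0\in\dom(\psi)$ one can even take $m\equiv 0$).

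Next I would extract convergences. From $\Psi(m_n)\le C$ and the superlinearity \eqref{psisl}, the de la Vall\'ee-Poussin criterion gives that $(m_n)$ is equi-integrable, hence (Dunford--Pettis) relatively weakly compact in $L^1(\Om)$: up to a subsequence $m_n\rightharpoonup\hat m$ weakly in $L^1(\Om)$, and $\hat m\ge0$ a.e. since the constraint set $\{m\ge0\}$ is convex and $L^1$-closed, hence weakly closed. Also $u_n\rightharpoonup\hat u$ weakly in $H^1_0(\Om)$ up to a subsequence, and by Rellich $u_n\to\hat u$ strongly in $L^2(\Om)$ and a.e. (after a further subsequence). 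The lower semicontinuity of the cost then splits into two pieces: $\Psi$ is convex and $\psi$ is l.s.c., so $\Psi$ is weakly l.s.c. on $L^1$, giving $\Psi(\hat m)\le\liminf\Psi(m_n)$; and for $J$, lower semicontinuity of $j$ in $s$ together with $u_n\to\hat u$ a.e. and the lower bound \eqref{condj} (with $c\|u_n\|_{L^2}^2\to c\|\hat u\|_{L^2}^2$, so one can invoke Fatou after subtracting a convergent term) yields $J(\hat u)\le\liminf J(u_n)$.

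The main obstacle is passing to the limit in the state equation, because the product $m_nu_n$ involves only a weak-$L^1$ convergence of $m_n$ against an $L^2$ (hence only strongly-$L^2$, not $L^\infty$) convergence of $u_n$, so one cannot directly conclude $m_nu_n\to\hat m\hat u$ in any useful sense, and the admissible class of $u$ must be interpreted in the variational sense $H^1_0(\Om)\cap L^2_m(\Om)$ described just before the theorem. The way I would handle this is the standard $\gamma$-convergence / truncation argument for these capacitary-potential problems: for a fixed test function $v\in H^1_0(\Om)\cap L^\infty(\Om)$ one controls $\int_\Om m_nu_nv\,dx$ using equi-integrability of $m_n$ and the bound on $\int_\Om m_nu_n^2$, showing that the $\gamma$-limit (in the sense of \cite{dmm}-type relaxation, cf. \cite{CaCoVa}) of the potentials $m_n$ is a capacitary measure $\mu$ with $\mu\le\hat m$ (as measures), so that the limit state $\hat u$ solves $-\Delta\hat u+\mu\hat u=f$; then one checks that replacing $\mu$ by the possibly larger absolutely continuous potential $\hat m$ does not increase the cost — using monotonicity of $\E$ in the potential and the fact that $\Psi$ only sees the density — so the pair $(\hat u,\hat m)$, suitably re-solved with potential $\hat m$, is admissible and optimal. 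Since the theorem only asserts existence of an optimal $\hat m\in L^1(\Om)$ and refers to \cite{CaCoVa} Theorem 2.19 for the general framework, I would invoke that relaxation result for the $\gamma$-convergence step rather than redoing it, and concentrate the written proof on the coercivity bound $\|u_n\|_{H^1_0}\le\|f\|_{H^{-1}}$, the equi-integrability of $(m_n)$, and the lower semicontinuity of $F$.
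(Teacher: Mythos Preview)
Your overall strategy---direct method, coercivity from testing the state equation with $u_n$, Dunford--Pettis compactness of $(m_n)$ via the superlinearity \eqref{psisl}, and lower semicontinuity of the cost via Fatou and convexity---matches the paper's proof exactly. The only substantive divergence is in how you pass to the limit in the state equation.

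You correctly flag the product $m_nu_n$ as the obstacle, but your proposed resolution has a gap. You suggest that the $\gamma$-limit of the potentials $m_n$ is some capacitary measure $\mu\le\hat m$, and then propose to repair the possible strict inequality by ``monotonicity of $\E$ in the potential''. That monotonicity argument is specific to the energy case $j(x,s)=f(x)s$ discussed in the introduction; for a general integrand $j$ satisfying only \eqref{condj} there is no reason $J$ should decrease when one re-solves the state equation with the larger potential $\hat m$. So the repair step, as written, does not go through.

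The paper avoids this detour entirely. Because $(m_n)$ is equi-integrable and $u_n\to\hat u$ a.e., one can pass to the limit directly in the state equation after testing with $S_l(u_n)v$ for $v\in H^1_0(\Om)\cap L^\infty(\Om)$: the key is that $u_nS_l(u_n)v$ is bounded in $L^\infty(\Om)$ and converges in measure, which is precisely the pairing needed against a weakly-$L^1$-convergent sequence. This gives
\[
\left|\into\big(\nabla\hat u\cdot\nabla v+\hat m\hat u\,v\big)S_l(\hat u)\,dx-\langle f,S_l(\hat u)v\rangle\right|\le\frac{C}{l},
\]
and letting $l\to\infty$ (then extending to $v\in H^1_0(\Om)\cap L^2_{\hat m}(\Om)$ by truncation) shows that $(\hat u,\hat m)$ itself satisfies the state equation. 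In other words, under the equi-integrability you already established, the limit potential \emph{is} $\hat m$ and no monotonicity repair is needed; the cut-off $S_l$ is the missing ingredient in your plan.
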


The optimality conditions for \eqref{pbco} are given by Theorem \ref{TConOp} below (see also Theorem 4.1 in \cite{CaCoVa}). Since our aim in the present work is to present some regularity conditions for the solutions of problem \eqref{pbco}, let us assume that the right-hand side $f$ in the state equation satisfies
\beq\label{Pref}
f\in W^{-1,r}(\Om),\quad\text{with }
\begin{cases}
r\ge2&\text{if }N=1\\
r>N&\text{if }N\ge2.
\end{cases}\eeq
By Stampacchia's estimates (see for instance \cite{Sta}), this implies that there exists $M>0$, such that for every $m\in L^1(\Om)$, $m\ge 0$ a.e. in $\Om$, the solution $u$ of the state equation in \eqref{pbco} is in $L^\infty(\Om)$, with $\|u\|_{L^\infty(\Om)}\leq M$. In particular, this means that the value of $j(x,s)$ for $|s|\ge M$ is not important and then we can replace in Theorem \ref{ExCoo} condition \eqref{condj} by 
$$\inf_{\{|s|\le M\}}j(x,s)\in L^1(\Om)\qquad\forall\,M>0.$$
Further we will assume $j(x,.)\in C^1(\RR)$ and satisfying
\beq\label{acoFT1} j(.,0)\in L^1(\Om),\qquad \max_{|s|\leq M}|\partial_s j(\cdot,s)|\in L^{r\over 2}(\Om)\quad\forall\, M>0.\eeq
In these conditions, the following result holds.

\begin{Thm}\label{TConOp} Assume that in Theorem \ref{ExCoo}, the right-hand side $f$ and the function $j$ satisfy (\ref{Pref}) and (\ref{acoFT1}) respectively, and define $h:\RR\to\RR$ by
\beq\label{Defh} h(\tau)=\max\big\{s\in \dom(\psi)\ :\ \tau\in\partial\psi(s)\big\}.\eeq
Then, if $\hat m$ is a solution of (\ref{pbco}), $\hat u$ is the corresponding state function, solution of
\beq\label{ostafu}
\begin{cases}
-\Delta\hat u+\hat m\,\hat u=f&\text{in }\Om\\
\hat u=0&\text{on }\partial\Om,
\end{cases}\eeq
and $\hat z$ is the adjoint state, solution of
\beq\label{ostad}
\begin{cases}
-\Delta\hat z+\hat m\,\hat z=\partial_sj(x,\hat u)&\text{in }\Om\\
\hat z=0&\text{on }\partial\Om,
\end{cases}\eeq
we have
\beq\label{condohm} \hat m\in L^\infty(\Om),\quad \hat u\hat z\in \partial\psi(\hat m),\ \ h_-(\hat u\hat z)\leq \hat m\leq h(\hat u\hat z),\quad\hbox{ a.e. in }\Om. \eeq
\end{Thm}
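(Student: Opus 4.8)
The plan is to derive the optimality conditions \eqref{condohm} by a standard perturbation argument on the admissible controls, combined with the characterization \eqref{carsd}--\eqref{repsp} of the subdifferential of a convex function. First I would fix a solution $\hat m$ of \eqref{pbco} with corresponding state $\hat u$ solving \eqref{ostafu}, and introduce the adjoint state $\hat z$ solving \eqref{ostad}; thanks to \eqref{Pref} and \eqref{acoFT1}, Stampacchia's estimates guarantee $\hat u,\hat z\in H^1_0(\Om)\cap L^\infty(\Om)$ with uniform bounds, and $\partial_s j(x,\hat u)\in L^{r/2}(\Om)$, so all the integrals below make sense. The key computation is the derivative of the state with respect to the control: for a perturbation $m_t=\hat m+t(\mu-\hat m)$ with $\mu\in L^1(\Om)$, $\mu\ge 0$, $\psi(\mu)\in L^1(\Om)$, and $t\in(0,1)$, the state $u_t$ solves $-\Delta u_t+m_t u_t=f$, and one shows $u_t\to\hat u$ with $\dot u:=\frac{d}{dt}\big|_{t=0^+}u_t$ solving $-\Delta\dot u+\hat m\dot u=-(\mu-\hat m)\hat u$ in the variational sense. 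Testing this equation with $\hat z$ and the adjoint equation with $\dot u$, we get
$$\into\partial_s j(x,\hat u)\,\dot u\,dx=\into\nabla\hat z\cdot\nabla\dot u\,dx+\into\hat m\hat z\dot u\,dx=-\into(\mu-\hat m)\hat u\hat z\,dx.$$

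Next I would write the first-order optimality inequality. Since $t\mapsto F(u_t,m_t)=J(u_t)+\Psi(m_t)$ has a minimum at $t=0$ over $[0,1]$, its right derivative at $0$ is nonnegative. The derivative of $J(u_t)$ is $\into\partial_s j(x,\hat u)\dot u\,dx=-\into(\mu-\hat m)\hat u\hat z\,dx$ by the identity above, and the derivative of $\Psi(m_t)$ at $t=0^+$ is, by convexity of $\psi$, bounded above by $\Psi(\mu)-\Psi(\hat m)$ (the difference quotient $\frac{\psi(m_t)-\psi(\hat m)}{t}$ is nondecreasing in $t$ and integrable, so by monotone convergence the limit equals $\into d_+\psi(\hat m)(\mu-\hat m)\,dx$ when finite; in any case it is $\le\Psi(\mu)-\Psi(\hat m)$). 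Combining, for every admissible $\mu$,
$$-\into(\mu-\hat m)\hat u\hat z\,dx+\Psi(\mu)-\Psi(\hat m)\ge 0,\quad\text{i.e.}\quad \into\psi(\hat m)\,dx\le\into\big[\psi(\mu)-(\mu-\hat m)\hat u\hat z\big]\,dx.$$
Choosing $\mu$ to be a localized perturbation of $\hat m$ (replacing $\hat m$ by an arbitrary admissible value on a small set and leaving it unchanged elsewhere) and using a Lebesgue-point argument, this pointwise-localizes to: for a.e.\ $x\in\Om$ and every $t\in\dom(\psi)$, $\psi(\hat m(x))-\hat u(x)\hat z(x)\,\hat m(x)\le\psi(t)-\hat u(x)\hat z(x)\,t$, which is exactly the statement $\hat u\hat z\in\partial\psi(\hat m)$ a.e.\ by the definition \eqref{defsd}. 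The bound $h_-(\hat u\hat z)\le\hat m\le h(\hat u\hat z)$ then follows from \eqref{Defh}: $\hat u\hat z\in\partial\psi(\hat m)$ means $\hat m$ is one of the values whose subdifferential contains $\hat u\hat z$, hence $\hat m\le h(\hat u\hat z)$ by maximality, and the lower bound comes from the monotonicity of $\partial\psi$ (equivalently, $h$ is nondecreasing with $h_-$ its left limit). Finally $\hat m\in L^\infty(\Om)$ follows because $\hat u\hat z\in L^\infty(\Om)$ and $h$ is locally bounded (being monotone and finite-valued on the relevant range), so $\hat m\le h(\hat u\hat z)\le h(\|\hat u\hat z\|_{L^\infty})<\infty$.

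The main obstacle I anticipate is the differentiability of the state map $m\mapsto u$ and the justification of passing to the limit in the difference quotients: one must show $u_t\to\hat u$ strongly enough (e.g.\ in $H^1_0$) and that $\frac{u_t-\hat u}{t}$ converges to $\dot u$, which requires care because the perturbation $\mu-\hat m$ is only in $L^1$, not in $L^{N/2}$ or better, so the product $(\mu-\hat m)\hat u$ a priori only lies in $L^1$ and the equation for $\dot u$ has an $L^1$ right-hand side. This is handled by using the uniform $L^\infty$ bound $\|u_t\|_{L^\infty}\le M$ from Stampacchia (valid since $m_t\ge 0$), which makes $(\mu-\hat m)u_t$ and $(\mu-\hat m)\hat z$ bounded in $L^1$ with the needed equi-integrability, together with the fact that $\hat z\in L^\infty$ as well; one can then pass to the limit directly in the \emph{integrated} identity $\into\partial_s j(x,u_t)\frac{u_t-\hat u}{t}dx=-\into(\mu-\hat m)\frac{u_t\hat z_t'}{\cdots}$ without ever needing $\dot u$ to be more than a distributional/renormalized solution — in fact the cleanest route is to avoid $\dot u$ altogether and test the equation $-\Delta(u_t-\hat u)+\hat m(u_t-\hat u)=-t(\mu-\hat m)u_t$ with $\hat z$, giving $\into\partial_s j(x,\hat u)(u_t-\hat u)\,dx+o(t)=-t\into(\mu-\hat m)u_t\hat z\,dx$, then divide by $t$ and let $t\to 0^+$ using $u_t\to\hat u$ in $L^2$ and dominated convergence. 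A secondary technical point is the localization/Lebesgue-point step producing the pointwise inequality from the integral one; this is routine once one restricts to perturbations supported on balls shrinking to a Lebesgue point of the integrand $x\mapsto\psi(\hat m(x))-\hat u(x)\hat z(x)\hat m(x)$ and of $x\mapsto\psi(t(x))-\hat u(x)\hat z(x)t(x)$ for a countable dense family of admissible constant values $t$.
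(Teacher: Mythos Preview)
Your proposal is correct and follows the same overall strategy as the paper: perturb the control along a segment toward an arbitrary admissible competitor, use the adjoint state to convert the derivative of $J$ into $-\int(\mu-\hat m)\hat u\hat z$, combine with convexity of $\Psi$ to obtain the integral inequality $\int[\psi(\hat m)-\hat m\hat u\hat z]\le\int[\psi(\mu)-\mu\hat u\hat z]$, and then localize to get $\hat u\hat z\in\partial\psi(\hat m)$ a.e.

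The one technical difference worth noting is how the low integrability of the perturbation is handled. The paper does not take the raw direction $\mu-\hat m\in L^1(\Om)$; instead it truncates, setting $m_\ep=\hat m+\ep\,T_k(m-\hat m)$ for fixed $k>0$. This makes the right-hand side of the linearized equation bounded, so the derivative $u'=\lim_{\ep\to0}(u_\ep-\hat u)/\ep$ exists in $H^1_0(\Om)$ and can be tested against \eqref{ostad} without any appeal to renormalized solutions; one then lets $k\to\infty$ using only that $\hat u,\hat z\in L^\infty(\Om)$ and $m,\hat m,\psi(m),\psi(\hat m)\in L^1(\Om)$. Your ``clean route'' of testing the difference equation directly with $\hat z$ also works, but the step you write as ``$\int\partial_s j(x,\hat u)(u_t-\hat u)\,dx+o(t)$'' needs a word of justification: the tested identity is exact (no $o(t)$), and the $o(t)$ is really the error $J(u_t)-J(\hat u)-\int\partial_s j(x,\hat u)(u_t-\hat u)\,dx$, whose smallness requires knowing $(u_t-\hat u)/t$ is bounded in some $L^q$ with $q\ge (r/2)'$. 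This follows from the Boccardo--Gallou\"et $L^1$ estimate applied to $-\Delta v+\hat m v=-t(\mu-\hat m)u_t$, but you should say so explicitly. The paper's truncation device buys you exactly this: it avoids ever invoking $L^1$-data estimates for the linearized problem. For the pointwise step, the paper simply observes that the integral inequality means $\hat m$ minimizes $\int[\psi(\cdot)-(\cdot)\hat u\hat z]\,dx$, which (the integrand being separable in $x$) forces pointwise minimality a.e.; your Lebesgue-point argument is equivalent but slightly heavier.
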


\begin{Rem} \label{Exista} Taking into account that $\dom(\psi)\subset [0,\infty)$, $\psi$ lower semicontinuous and (\ref{psisl}) we deduce that 
$$\lim_{s\to\infty}d_-\psi(s)=\infty,$$
and that, taking 
\beq\label{defal} \alpha:=\inf\dom(\psi)\geq 0,\eeq
one of the following conditions hold
$$\lim_{s\searrow \alpha}\psi(s)=\infty,\ \lim_{s\searrow \alpha}d_+\psi(s)=-\infty\quad \hbox{ or }\quad \alpha\in \dom(\psi),\ d_-\psi(\alpha)=-\infty.$$
Therefore, for every $\tau\in \RR$, there exists $s\in \dom(\psi)$ such that $\tau\in \partial\psi(s)$.
\end{Rem}

\begin{Rem}\label{Rdah} By \eqref{carsd} and \eqref{repsp}, we have that $h$ in Theorem \ref{TConOp}) is also given by
$$h(\tau)=d_+\psi^\ast(\tau)\qquad\forall\,\tau\in\RR.$$
It is always a non-decreasing function, continuous on the right. Moreover, it also satisfies
\beq\label{defh3} h(\tau)=\max\big\{s\in \dom(\psi):\ \tau\geq d_-\psi(s)\big\}.\eeq
\end{Rem}

From \eqref{condohm} and the regularity results for elliptic equations, we deduce that the optimal measure $\hat m$ is very regular if $h$ is and the functions $j$ and $f$ are very regular too. In this sense, the following proposition provides some necessary and sufficient conditions to have $h$ continuous and locally Lipschitz-continuous respectively. 

\begin{Pro}\label{Pregh} The function $h$ defined by (\ref{Defh}) satisfies
\beq\label{resupmu} h\in C^0(\RR)\Longleftrightarrow \psi\ \hbox{ is strictly convex. }\eeq
\beq\label{resLimu} h\in{\rm Lip}(\RR)\iff 0<\inf_{s_1,s_2\in \dom(\psi)\atop s_1<s_2}{d_-\psi(s_2)-d_+\psi(s_1)\over s_2-s_1}.\eeq
\end{Pro}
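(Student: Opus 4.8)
The plan is to translate everything into statements about the conjugate $\psi^*$ via the identity $h=d_+\psi^*$ from Remark \ref{Rdah}: then \eqref{resupmu} says that $\psi^*$ is of class $C^1$, and \eqref{resLimu} that $h=(\psi^*)'$ is Lipschitz (i.e. $\psi^*$ is $C^{1,1}$, equivalently $\psi$ is strongly convex). As a preliminary step I would record that, by Remark \ref{Exista}, for each $\tau$ the set in \eqref{Defh} is non-empty, closed (by lower semicontinuity of $\psi$) and bounded above (since $d_-\psi(s)\to\infty$), so its maximum is attained and $\tau\in\partial\psi(h(\tau))$ for every $\tau$; dually $h(\tau)\in\partial\psi^*(\tau)$, so $\dom(\psi^*)=\RR$, $\psi^*$ is finite and continuous, and $h=d_+\psi^*$ is non-decreasing and right-continuous. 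I would also note that whenever $s_1<s_2$ both lie in $\dom(\psi)$, the one-sided derivatives $d_+\psi(s_1)$ and $d_-\psi(s_2)$ are finite (squeezed between the values $\pm\infty$ allowed by the conventions and the finite difference quotient $(\psi(s_2)-\psi(s_1))/(s_2-s_1)$) and $d_+\psi(s_1)\le d_-\psi(s_2)$ by the monotonicity \eqref{apico}.

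For \eqref{resupmu}: since $h$ is non-decreasing and right-continuous, it is discontinuous at $\tau_0$ precisely when $d_-\psi^*(\tau_0)<d_+\psi^*(\tau_0)$, i.e. when $\partial\psi^*(\tau_0)$ contains two points $s_1<s_2$; by \eqref{repsp} this means $\tau_0\in\partial\psi(s_1)\cap\partial\psi(s_2)$, and combining $\tau_0\le d_+\psi(s_1)\le d_-\psi(s_2)\le\tau_0$ with \eqref{carsd} and \eqref{apico} forces $d_-\psi\equiv d_+\psi\equiv\tau_0$ on $(s_1,s_2)$, i.e. $\psi$ is affine there, contradicting strict convexity. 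Conversely, if $\psi$ is not strictly convex it is affine with some slope $\tau_0$ on a non-degenerate interval $[s_1,s_2]\subset\dom(\psi)$, hence $\tau_0\in\partial\psi(s)$ for all $s\in[s_1,s_2]$, so $[s_1,s_2]\subset\partial\psi^*(\tau_0)$ and $h$ jumps at $\tau_0$. This proves the first equivalence.

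For \eqref{resLimu}, write $c_0$ for the right-hand infimum; note $c_0\ge0$ by \eqref{apico}. For $(\Leftarrow)$, suppose $c_0>0$, take $\tau_1<\tau_2$ and set $s_i=h(\tau_i)$, so $s_1\le s_2$ and $\tau_i\in\partial\psi(s_i)$; if $s_1<s_2$ then $\tau_1\le d_+\psi(s_1)$, $\tau_2\ge d_-\psi(s_2)$ and the definition of $c_0$ give $\tau_2-\tau_1\ge d_-\psi(s_2)-d_+\psi(s_1)\ge c_0(s_2-s_1)$, so $0\le h(\tau_2)-h(\tau_1)\le c_0^{-1}(\tau_2-\tau_1)$ and $h$ is $c_0^{-1}$-Lipschitz. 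For $(\Rightarrow)$, suppose $h$ is $L$-Lipschitz, fix $s_1<s_2$ in $\dom(\psi)$ and set $\tau_1:=d_+\psi(s_1)$, $\tau_2:=d_-\psi(s_2)$ (finite, with $\tau_1\le\tau_2$); by \eqref{carsd}, $\tau_1\in\partial\psi(s_1)$ and $\tau_2\in\partial\psi(s_2)$, hence $s_1\in\partial\psi^*(\tau_1)$, $s_2\in\partial\psi^*(\tau_2)$ by \eqref{repsp}, so $s_1\ge d_-\psi^*(\tau_1)$ and $s_2\le d_+\psi^*(\tau_2)=h(\tau_2)$. Since $h$ is continuous, $d_-\psi^*(\tau_1)=\lim_{\ep\searrow0}h(\tau_1-\ep)=h(\tau_1)$, and therefore $s_2-s_1\le h(\tau_2)-h(\tau_1)\le L(\tau_2-\tau_1)=L\big(d_-\psi(s_2)-d_+\psi(s_1)\big)$; since $s_1<s_2$ was arbitrary, $c_0\ge L^{-1}>0$.

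The genuinely delicate point — and the only one I expect to require care — is the identification $d_-\psi^*(\tau_1)=h(\tau_1)$ in the $(\Rightarrow)$ part of \eqref{resLimu}: it rests on the general fact that for a finite convex function the left derivative at a point is the left limit of the (non-decreasing, right-continuous) right derivative, so that the continuity hypothesis on $h=d_+\psi^*$ is exactly what upgrades this left limit to the value $h(\tau_1)$; without it one would only obtain $s_2-s_1\le h(\tau_2)-d_-\psi^*(\tau_1)$, which is too weak. Everything else is routine bookkeeping with one-sided derivatives and the duality \eqref{repsp}, together with the finiteness check in the first paragraph.
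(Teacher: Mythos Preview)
Your proof is correct and close in spirit to the paper's, but organized somewhat differently around the identity $h=d_+\psi^*$ from Remark~\ref{Rdah}. For \eqref{resupmu}, the paper argues the forward direction by first showing that strict convexity forces $d_+\psi(s_1)<d_-\psi(s_2)$ for $s_1<s_2$, hence a unique $s$ with $\tau\in\partial\psi(s)$, and then invokes the closed-graph property of $\partial\psi$ to get continuity; you instead characterize discontinuity of $h$ directly as $\partial\psi^*(\tau_0)$ being a non-degenerate interval and dualize. For \eqref{resLimu}, the paper first uses \eqref{resupmu} to reduce both sides to the strictly convex case, where the bijection $\tau\leftrightarrow s=h(\tau)$ makes the Lipschitz inequality literally equivalent to the infimum bound; you treat the two implications separately and isolate the one non-trivial step, namely $d_-\psi^*(\tau_1)=h(\tau_1)$ under the continuity of $h$. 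The paper's reduction is slightly slicker bookkeeping, while your approach makes the role of the conjugate and the exact place where continuity is used more transparent; neither gains or loses any real content.
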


Further assumptions on $\psi$ than those in Proposition \ref{Pregh} provide more regularity for $h$, but it is interesting to note that 
$$d_+\psi(\alpha)>-\infty\Longrightarrow \partial \psi(\alpha)=\big(-\infty,d_+\psi(\alpha)\big]\Longrightarrow h(s)=\alpha,\ \forall\, s\in (-\infty, d_+\psi(\alpha)\big),$$
with $\alpha$ defined by \eqref{defal}. Thus, $h$ cannot be an analytic function if $d_+\psi(\alpha)>-\infty.$ Even more, we have the following result.

\begin{Pro}\label{deh0}
Assume that $\alpha$ defined by (\ref{defal}) is such that $d_+\psi(\alpha)>-\infty$. Then 
\beq\label{coderal}\exists\, h'(d_+\psi(\alpha))\iff \lim_{s\searrow \alpha }{d_+\psi(s)-d_+\psi(\alpha)\over s-\alpha}=\infty.\eeq
\end{Pro}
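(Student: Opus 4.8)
\medskip
\noindent\textbf{Proof strategy.}
Write $\tau_0:=d_+\psi(\alpha)$, a real number by hypothesis, and set $s_1:=h(\tau_0)$. Everything is local near $\tau_0$, and the guiding idea is that $h$ is a one-sided inverse of the non-decreasing map $s\mapsto d_+\psi(s)$. First I would pin down the shape of $h$ near $\tau_0$. Using that $s<t\mapsto(\psi(t)-\psi(s))/(t-s)$ is non-decreasing in each variable, for $s>\alpha$ one gets $d_-\psi(s)\ge(\psi(s)-\psi(\alpha))/(s-\alpha)\ge d_+\psi(\alpha)=\tau_0$; since $d_-\psi(\alpha)=-\infty$ (as $\alpha=\inf\dom(\psi)$) and $\dom(\psi)\subset[\alpha,\infty)$, the description $h(\tau)=\max\{s\in\dom(\psi):\tau\ge d_-\psi(s)\}$ of Remark~\ref{Rdah} gives $h(\tau)=\alpha$ for all $\tau<\tau_0$. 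For $\tau>\tau_0$, Remark~\ref{Exista} provides $s\in\dom(\psi)$ with $\tau\in\partial\psi(s)$; here $s\ne\alpha$ because $\partial\psi(\alpha)=(-\infty,\tau_0]$, so $h(\tau)\ge s>\alpha$. Finally $h$ is non-decreasing and right-continuous, so $h(\tau_0^-)=\alpha\le s_1=h(\tau_0)$.

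Now I would split into two cases. If $s_1>\alpha$, then $h$ jumps at $\tau_0$, hence $h'(\tau_0)$ does not exist; moreover $\tau_0\in\partial\psi(s_1)$ gives $d_-\psi(s_1)\le\tau_0$, so for $s\in(\alpha,s_1)$ convexity forces $\tau_0\le d_-\psi(s)\le d_+\psi(s)\le d_-\psi(s_1)\le\tau_0$, i.e.\ $d_+\psi(s)=\tau_0$ there, and the limit in \eqref{coderal} equals $0$, not $\infty$. Thus both sides of \eqref{coderal} are false and the claim holds in this case. So assume $s_1=\alpha$: then $h\equiv\alpha$ on $(-\infty,\tau_0]$, $h>\alpha$ on $(\tau_0,\infty)$, $h$ is continuous at $\tau_0$, its left difference quotient there is $0$, and hence $h'(\tau_0)$ exists (and equals $0$) if and only if $\lim_{\tau\searrow\tau_0}\big(h(\tau)-\alpha\big)/(\tau-\tau_0)=0$.

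It remains to prove, in the case $s_1=\alpha$, that this last condition is equivalent to $\lim_{s\searrow\alpha}\big(d_+\psi(s)-\tau_0\big)/(s-\alpha)=\infty$. For ``$\Leftarrow$'': given $M>0$, pick $\delta>0$ with $d_+\psi(s)-\tau_0\ge M(s-\alpha)$ on $(\alpha,\alpha+\delta)$; continuity of $h$ at $\tau_0$ gives $\eta>0$ with $h(\tau)<\alpha+\delta$ for $\tau_0<\tau<\tau_0+\eta$, and then for $s\in(\alpha,h(\tau))$ convexity yields $d_+\psi(s)\le d_-\psi(h(\tau))\le\tau$ (since $\tau\in\partial\psi(h(\tau))$), whence $M(s-\alpha)\le\tau-\tau_0$; letting $s\nearrow h(\tau)$ gives $h(\tau)-\alpha\le(\tau-\tau_0)/M$, and as $M$ is arbitrary the limit is $0$. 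For ``$\Rightarrow$'': if the right-hand limit is not $\infty$, take $s_n\searrow\alpha$ with $\big(d_+\psi(s_n)-\tau_0\big)/(s_n-\alpha)\le C$; then $\tau_n:=d_+\psi(s_n)>\tau_0$ (otherwise $\psi$ would be affine of slope $\tau_0$ on $[\alpha,s_n]$, contradicting $s_1=\alpha$) and $\tau_n\to\tau_0$ (if instead $\lim_{s\searrow\alpha}d_+\psi(s)>\tau_0$ the right-hand limit would already be $\infty$); since $d_-\psi(s_n)\le d_+\psi(s_n)=\tau_n$, Remark~\ref{Rdah} gives $h(\tau_n)\ge s_n$, so $\big(h(\tau_n)-\alpha\big)/(\tau_n-\tau_0)\ge(s_n-\alpha)/\big(d_+\psi(s_n)-\tau_0\big)\ge1/C$, contradicting the vanishing of the limit. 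Since $\lim_{s\searrow\alpha}\big(d_+\psi(s)-\tau_0\big)/(s-\alpha)=\infty$ is incompatible with $s_1>\alpha$, combining the two cases yields exactly \eqref{coderal}.

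The routine ingredients (monotonicity of $d_\pm\psi$, the inequality $d_+\psi(s)\le d_-\psi(s')$ for $s<s'$, right-continuity of $h$, and its description in Remark~\ref{Rdah}) are all at hand; the only genuine care is in the case $s_1=\alpha$, where one must pass between the value $h(\tau)$ — expressed via $d_-\psi$ — and the hypothesis — expressed via $d_+\psi$ — while tracking strict versus non-strict inequalities and allowing for the possibility that $s\mapsto d_+\psi(s)$ jumps at $\alpha$ or oscillates, so that the limit in \eqref{coderal} need not exist a priori. That bookkeeping is the main (and rather mild) obstacle.
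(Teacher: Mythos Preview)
Your argument is correct and follows the same underlying idea as the paper's proof: since $h\equiv\alpha$ to the left of $\tau_0=d_+\psi(\alpha)$, differentiability of $h$ at $\tau_0$ reduces to the vanishing of the right difference quotient of $h$, which is then tied to the blow-up of the difference quotient of $d_+\psi$ at $\alpha$ via the inverse-like relation between $h$ and $d_\pm\psi$ encoded in \eqref{defh3}.

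The paper's presentation is terser than yours. It does not split off the case $h(\tau_0)>\alpha$ explicitly; instead it observes in one line that differentiability at $\tau_0$ forces $h(\tau_0)=\alpha$ (continuity from the left), then runs the $\Rightarrow$ direction by the same contrapositive translation through \eqref{defh3} that you use, and dismisses the converse with ``the proof of the reciprocal follows by a similar argument.'' Your version is more explicit on two counts: you treat the degenerate case $h(\tau_0)>\alpha$ directly (showing both sides of \eqref{coderal} fail there), and you write out the $\Leftarrow$ direction in full, including the observation that $\lim_{s\searrow\alpha}d_+\psi(s)=\tau_0$ under the standing hypothesis $s_1=\alpha$. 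These additions make the argument more self-contained but do not change its substance.
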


From Proposition \ref{Pregh}, we have that $h$ (and then $\hat m$) is not continuous if $\psi$ is not strictly convex. Moreover, Proposition \ref{deh0} shows that even if $h$ is continuous, it is not derivable in general. Theorem \ref{Topr} below provides a sufficient condition to get $\hat m \hat u\hat z$ in $BV(\Om)$, and then shows that the discontinuity surfaces of $\hat m$ have finite perimeter.

\begin{Thm} \label{Topr} In addition to the conditions in Theorem \ref{ExCoo} we assume $\Om\in C^{1,1}$,
\beq\label{defgs} g(\tau):=h(\tau)\tau\qquad\forall\,\tau\in\RR,\eeq
non-decreasing in $\tau$, and
\beq\label{HipFc} \max_{|s|\leq M}|\nabla_x\partial_s j(.,s)|\in L^q(\Om),\quad \max_{|s|\leq M}|\partial^2_{ss} j(.,s)|\in L^1(\Om),\quad \forall\,M>0,\eeq
with 
\beq\label{defp}
q\ge{2N\over N+1}\ \hbox{ if }1\le N\le2,\qquad q>{N\over2}\ \hbox{ if }N\ge3.\eeq
Then, for every $f\in BV(\Om)\cap L^q(\Om)$ and every solution $\hat m$ of \eqref{pbco}, we have
$$\hat u,\hat z\in W^{2,q}(\Om),\qquad\hat m\hat u\hat z\in BV(\Om),$$
with $\hat u$, $\hat z$ the solutions of \eqref{ostafu} and \eqref{ostad} respectively.
\end{Thm}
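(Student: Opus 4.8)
The plan is to reduce the statement to the semilinear regularity result of Theorem~\ref{Threg}, applied to the product $v:=\hat u\hat z$ of the state and of the adjoint state. By Theorem~\ref{TConOp}, every solution $\hat m$ of \eqref{pbco} satisfies $\hat m\in L^\infty(\Om)$ and \eqref{condohm}, so in particular $h_-(\hat u\hat z)\le\hat m\le h(\hat u\hat z)$ a.e.\ in $\Om$, where $\hat u$, $\hat z$ solve \eqref{ostafu} and \eqref{ostad}. Stampacchia's estimates, available thanks to \eqref{Pref} and the bound on $\partial_sj$ in \eqref{acoFT1}, give $\hat u,\hat z\in L^\infty(\Om)$, hence $\hat m\hat u,\hat m\hat z\in L^\infty(\Om)$; the right-hand side of \eqref{ostafu} then lies in $L^q(\Om)$ because $f\in L^q(\Om)$, and, after a short bootstrap that uses $W^{2,q}(\Om)\hookrightarrow C^0(\overline{\Om})$ and the $L^q$-bound on $\nabla_x\partial_sj$ in \eqref{HipFc}, the same holds for the right-hand side of \eqref{ostad}. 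The Calder\'on--Zygmund estimate on the $C^{1,1}$ domain $\Om$ then yields $\hat u,\hat z\in W^{2,q}(\Om)\cap W^{1,q}_0(\Om)$, which is the first part of the statement. Since $q$ satisfies \eqref{defp} we have $2q>N$, so $\hat u,\hat z\in C^0(\overline{\Om})$, the function $v$ vanishes on $\partial\Om$, and $\nabla v=\hat z\nabla\hat u+\hat u\nabla\hat z\in L^{Nq/(N-q)}(\Om)$ (or $L^\infty(\Om)$ if $q\ge N$); in particular $v\in W^{1,p}_0(\Om)$ for every $p<N/(N-1)$.

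Next I would write the equation for $v$. Multiplying \eqref{ostafu} by $\hat z$, \eqref{ostad} by $\hat u$, adding, and using the identity $\Delta(\hat u\hat z)=\hat z\Delta\hat u+\hat u\Delta\hat z+2\nabla\hat u\cdot\nabla\hat z$ (legitimate since $\hat u,\hat z\in W^{2,q}(\Om)$), one obtains
\[-\Delta v+2\hat m v=F\ \text{ in }\Om,\qquad v=0\ \text{ on }\partial\Om,\qquad F:=\hat z f+\hat u\,\partial_sj(x,\hat u)-2\nabla\hat u\cdot\nabla\hat z .\]
Put $g(\tau)=h(\tau)\tau$ as in \eqref{defgs}, which is non-decreasing by hypothesis, and $G:=2g$. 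Multiplying $h_-(v)\le\hat m\le h(v)$ by $v$: where $v\ge0$ this gives directly $g_-(v)\le\hat m v\le g_+(v)$, while where $v<0$ the monotonicity of $g$ forces $h_-(v)=h(v)$, and hence $\hat m v=g(v)$; in all cases $G_-(v)\le 2\hat m v\le G_+(v)$ a.e.\ in $\Om$. Since moreover $2\hat m v\in L^\infty(\Om)$, $F\in L^1(\Om)$ and $v\in W^{1,p}_0(\Om)$ for $p<N/(N-1)$, the pair $(v,2\hat m v)$ is a distributional solution of \eqref{ecto0} with nonlinearity $G$ and datum $F$; by Remark~\ref{Retex2} and the uniqueness in Theorem~\ref{Thunic} it is \emph{the} solution of \eqref{ecto0} in the sense of Definition~\ref{defsol}.

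The crux is then to prove $F\in BV(\Om)$. Computing $\nabla F$ by the Leibniz and chain rules,
\[\nabla F=\hat z\,\nabla f+f\,\nabla\hat z+\partial_sj(x,\hat u)\nabla\hat u+\hat u\big(\nabla_x\partial_sj(x,\hat u)+\partial^2_{ss}j(x,\hat u)\nabla\hat u\big)-2\big(D^2\hat u\,\nabla\hat z+D^2\hat z\,\nabla\hat u\big),\]
the term $\hat z\,\nabla f$ is a bounded continuous function times the measure $\nabla f\in\mathcal{M}(\Om)^N$, hence a bounded measure; every other term is shown to belong to $L^1(\Om)$ by combining the $L^\infty$-bounds on $\hat u,\hat z$, the integrability conditions \eqref{acoFT1} and \eqref{HipFc} on $\partial_sj$, $\nabla_x\partial_sj$, $\partial^2_{ss}j$, the embeddings $W^{2,q}(\Om)\hookrightarrow W^{1,Nq/(N-q)}(\Om)$ and $BV(\Om)\hookrightarrow L^{N/(N-1)}(\Om)$, and H\"older's inequality. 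The exponent condition \eqref{defp} is exactly what makes the quadratic terms $D^2\hat u\cdot\nabla\hat z$, $D^2\hat z\cdot\nabla\hat u$ integrable, since $D^2\hat u\in L^q(\Om)$, $\nabla\hat z\in L^{Nq/(N-q)}(\Om)$ and $1/q+(N-q)/(Nq)=(2N-q)/(Nq)\le1$ precisely when $q\ge 2N/(N+1)$. One makes this rigorous by approximating $f$ with a sequence in $W^{1,1}(\Om)\cap L^q(\Om)$ converging to $f$ in $L^1(\Om)$ with converging total variations, and $\hat u,\hat z$ with mollifications, which gives $F\in BV(\Om)$.

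Finally, Theorem~\ref{Threg} applied to \eqref{ecto0} with the non-decreasing $G$ and $F\in BV(\Om)$ on the $C^{1,1}$ domain $\Om$ shows that the solution $(v,2\hat m v)$ belongs to $W^{2,N/(N-1)}(\Om)\times BV(\Om)$; in particular $\hat m\hat u\hat z=\tfrac12(2\hat m v)\in BV(\Om)$, which completes the argument. I expect the main obstacle to be the third step: checking $F\in BV(\Om)$ under the sharp hypotheses \eqref{HipFc}, \eqref{defp} needs careful tracking of the Sobolev exponents, the delicate contributions being the quadratic term $\nabla\hat u\cdot\nabla\hat z$ and the terms with the second $s$-derivative of $j$; the bootstrap giving $\hat z\in W^{2,q}(\Om)$ in the first step is a comparatively minor point.
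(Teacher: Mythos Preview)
Your proposal is correct and follows essentially the same route as the paper: obtain $\hat u,\hat z\in W^{2,q}(\Om)$ from $\hat m\in L^\infty(\Om)$ and Calder\'on--Zygmund, derive the semilinear equation for $v=\hat u\hat z$, check that the right-hand side is in $BV(\Om)$ via \eqref{HipFc} and \eqref{defp}, and invoke Theorem~\ref{Threg}. Your write-up is in fact more detailed than the paper's (which leaves the verification of $F\in BV(\Om)$ and of $g_-(v)\le\hat m v\le g_+(v)$ to the reader), and your equation $-\Delta v+2\hat m v=F$ carries the correct factor $2$ in front of the zero-order term, whereas the paper's displayed equation omits it; this is harmless since $2g$ is non-decreasing whenever $g$ is.
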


\begin{Rem} In the assumptions of Theorem \ref{Topr}, the functions $\hat u$, $\hat z$ are continuous, and thus, the set $E:=\{\hat u\hat z=0\}$ is a closed subset of $\overline\Om$ (which contains the boundary). The fact that $\hat m\hat u\hat z$ belongs to $BV(\Om)$, proves then that $\hat m$ belongs to $BV_{loc}(\Om\setminus E)$.
\end{Rem}

\begin{Rem}\label{sufcogc} Since $h$ is non-decreasing, a sufficient condition to have $g$ non-decreasing is to assume $d_+\psi(\alpha)\geq 0$, with $\alpha$ defined by \eqref{defal}.
\end{Rem}

\begin{proof} [Proof of Theorem \ref{ExCoo}] In order to prove the existence of solution, we apply the direct method of the calculus of variations. We take $m_n\in L^1(\Om)$, $m_n\ge0$ a.e. in $\Om$, such that the solution $u_n$ of the state equation in \eqref{pbco} satisfies
$$\lim_{n\to\infty}\into\big(j(x,u_n)+\psi(m_n)\big)dx={\cal I},$$
where we have denoted by ${\cal I}$ the infimum of \eqref{pbco}. In particular
$$\limsup_{n\to\infty}\into\psi(m_n)\,dx<\infty,$$
which, taking into account \eqref{psisl}, implies that $m_n$ is compact in the weak topology of $L^1(\Om)$. Moreover, $f\in H^{-1}(\Om)$ implies that $u_n$ is bounded in $H^1_0(\Om)$. Therefore, extracting a subsequence if necessary, there exist $\hat m\in L^1(\Om)$, and $\hat u\in H^1_0(\Om)$ such that
\beq\label{e1Topr} m_n\rightharpoonup \hat m \ \hbox{ in }L^1(\Om),\qquad u_n\rightharpoonup \hat u \ \hbox{ in }H^1_0(\Om).\eeq
From these convergences, the Rellich-Kondrachov compactness theorem, the lower semicontinuity of $\psi$, and Fatou's Lemma, we deduce
\[\begin{split}
{\cal I}&=\lim_{n\to\infty}\into\big(j(x,u_n)+\psi(m_n)\big)\,dx\\
&=\liminf_{n\to\infty}\into\big(j(x,u_n)-a+c|u_n|^2+\psi(m_n)\big)\,dx-\into\big(-a+c|u|^2)\,dx\\
&\ge\into \big(j(x,\hat u)+\psi(\hat m)\big)\,dx.
\end{split}\]
Proving then that $\hat u$ is the solution of \eqref{ostafu}, we will deduce that $\hat m$ is a solution of \eqref{pbco}. For this purpose, given $v\in H^1_0(\Om)\cap L^\infty(\Om)$, and $l>0$, we take $S_l(u_n)v$ as test function in the equation satisfied by $u_n$. This gives
$$-{1\over l}\into|\nabla u_n|^2v\sgn(u_n)\,dx+\into\nabla u_n\cdot\nabla v\,S_l(u_n)\,dx+\into m_nu_nS_l(u_n)v\,dx=\langle f,S_l(u_n)v\rangle.$$
By \eqref{e1Topr}, the Rellich-Kondrachov compactness theorem, and $S_l(u_n)v$ bounded in $L^\infty(\Om)$, we can pass to the limit in $n$, in the three last terms in this equality. In the first term, we can use that $u_n$ is bounded in $H^1_0(\Om)$ and that $v$ belongs to $L^\infty(\Om)$. Thus, there exists $C>0$ independent of $m$ such that
$$\left|\into\big(\nabla\hat u\cdot\nabla v+\hat m\hat uv)S_l(\hat u)\,dx-\langle f,S_l(\hat u)v\rangle\right|\le{C\over l}.$$
Using that $S_l(\hat u)v$ converges strongly to $v$ in $H^1_0(\Om)$ as $l\to\infty$, and the Lebesgue dominated convergence theorem, we can pass to the limit as $l\to\infty$ in this equality to get
$$\into \big(\nabla\hat u\cdot\nabla v+\hat m\hat uv)\,dx=\langle f,v\rangle,\quad\forall\,v\in H^1_0(\Om)\cap L^\infty(\Om).$$
If $v$ is just in $H^1_0(\Om)\cap L^2_m(\Om)$, we prove that this equality also holds true just replacing $v$ by $T_k(v)$ and then passing to the limit as $k\to\infty$.
\end{proof}

\begin{proof} [Proof of Theorem \ref{TConOp}]
Let $\hat m$ be a solution of \eqref{pbco} and define $\hat u$, $\hat z$ as the solutions of (\ref{ostafu}) and \eqref{ostad} respectively. Since $f$ satisfies (\ref{Pref}), Stampacchia's estimates (see \cite{Sta}) show that $\hat u$ belongs to $H^1_0(\Om)\cap L^\infty(\Om)$. By \eqref{acoFT1} we then have that $\hat z$ solution of (\ref{ostad}) is well defined and belongs to $H^1_0(\Om)\cap L^2_{\hat m}(\Om)$.

Now, for another non-negative function $m\in L^1(\Om)$ such that $\psi(m)$ is integrable, $k>0$, and $\ep\in (0,1]$, we define $m_\ep$ as
$$m_\ep=\hat m +\ep\, T_k(m -\hat m),$$
and $u_\ep$ as the solution of 
$$\begin{cases}
-\Delta u_\ep+m_\ep u_\ep=f&\hbox{in }\Om\\
u_\ep=0&\hbox{on }\partial\Om.\end{cases}$$
Taking into account that 
$$m_\ep=\begin{cases}
(1-\ep)\hat m+\ep m&\hbox{if }|m-\hat m|\le k\\
\dis\Big(1-{\ep k\over|m-\hat m|}\Big)\hat m+{\ep k\over|m-\hat m|}m&\hbox{if }|m-\hat m|>k,
\end{cases}$$
that $\hat m$ is a solution of \eqref{pbco}, and the convexity of $\psi$, we deduce
\[\begin{split}
&\into j(x,u_\ep)+\int_{\{|m-\hat m|\le k\}}\hskip-8pt\big((1-\ep)\psi(\hat m)+\ep\,\psi(m)\big)\,dx\\
&\qquad+\int_{\{|m-\hat m|> k\}}\hskip-8pt\Big(\Big(1-{\ep k\over |m-\hat m|}\Big)\psi(\hat m)+{\ep k\over |m-\hat m|} \psi(m)\Big)\,dx\\
&\qquad\ge\into \big(j(x,u_\ep)+ \psi(m_\ep)\big)\,dx
\ge\into \big(j(x,\hat u)+ \psi(\hat m)\big)dx,
\end{split}\]
for every $\ep>0$. Using then that
$${u_\ep-u\over\ep}\rightarrow u'\ \hbox { in }H^1_0(\Om),$$
with $u'\in H^1_0(\Om)\cap L^\infty(\Om)$ the solution of
$$\begin{cases}
-\Delta u'+\hat m u'+T_k(m-\hat m)\hat u=0&\hbox{in }\Om\\
u'=0&\hbox{on }\partial\Om,\end{cases}$$
we get
\beq\label{e3Topr}
\begin{split}\into\partial_s j(x,\hat u)u'dx+\int_{\{|m-\hat m|\le k\}}\hskip-10pt\big(\psi(m)-\psi(\hat m)\big)\,dx\\
\qquad+\int_{\{|m-\hat m|>k\}}\hskip-10pt{k\big(\psi(m)-\psi(\hat m)\big)\over |m-\hat m|}\,dx\ge0.
\end{split}\eeq
On the other hand, taking $u'$ as test function in \eqref{ostad}, and $\hat z$ as test function in \eqref{ostafu} we deduce
$$\into\partial_sj(x,u)u'\,dx=\into\big(\nabla\hat z\cdot\nabla u'+\hat m\hat z u'\big)\,dx=-\into T_k(m-\hat m)\hat u\hat z\,dx.$$
Therefore \eqref{e3Topr} provides
$$-\into T_k(m-\hat m)\hat u\hat z\,dx+\int_{\{|m-\hat m|\leq k\}} \hskip-10pt\big(\psi(m)-\psi(\hat m)\big)\,dx+\int_{\{|m-\hat m|> k\}} \hskip-10pt { k\big(\psi(m)-\psi(\hat m)\big)\over |m-\hat m|}\,dx\ge0.$$
Using here that $m$, $\hat m$, $\psi(m)$ and $\psi(\hat m)$ belong to $L^1(\Om)$ and that the second assertion in \eqref{acoFT1}, $\hat z$ solution of \eqref{ostad} and Stampacchia's estimates imply that $\hat z$ is in $L^\infty(\Om)$, we can take the limit as $k\to\infty$ to deduce
\[\begin{split}
&\into\big(\psi(m)-m\hat u\hat z\big)\,dx\ge\into\big(\psi(\hat m)-\hat m\hat u\hat z\big)\,dx,\\
&\forall\,m\in L^1(\Om),\ m\ge0\hbox{ a.e. in }\Om,\ \into\psi(m)\,dx<\infty.
\end{split}\]
This implies that $\hat m$ satisfies
$$\hat m\in\dom(\psi),\quad\psi(\hat m)-\hat m\hat u\hat z=\min_{s\in \dom(\psi)}\big\{\psi(s)-s\hat u\hat z\big\},\quad\hbox{a.e. in }\Om,$$
or equivalently, 
$$\hat m\hat u\hat z=\psi(\hat m)+\psi^\ast(\hat u\hat z).$$
By \eqref{repsp} this is also equivalent to $\hat m\in \partial\psi^\ast(\hat u\hat z)$, and also to $\hat u\hat z\in \partial\psi(\hat m).$ From (\ref{carsd}) applied to $\psi^\ast$ and Remark \ref{Rdah} we then deduce the third assertion in (\ref{condohm}). Combined with $\hat u$ and $\hat z$ in $L^\infty(\Om)$, this also implies that $\hat m$ is in $L^\infty(\Om)$.
\end{proof}

\begin{proof} [Proof of Proposition \ref{Pregh}]
Let us prove \eqref{resupmu}. If $\psi$ is strictly convex, the quotient function defined by (\ref{apico}) restricted to the set
$$\big\{(s,t)\in\dom(\psi)\times\dom(\psi)\ :\ s<t\big\},$$
is strictly increasing in $s$ and $t$. This proves
$$d_+\psi(s_1)<d_-\psi(s_2)\qquad\forall\,s_1,s_2\in\dom(\psi),\ s_1<s_2.$$
By Remark \ref{Exista}, this implies that for every $\tau\in\RR$, there exists a unique $s\in\dom(\psi)$ such that $\tau\in\partial\psi(s)$. By definition (\ref{Defh}) of $h$ we deduce that $h(\tau)$ agrees with such $s$. Now, we observe that the lower semicontinuity of $\psi$ and definition \eqref{defsd} of $\partial\psi$ imply the following continuity property for $\partial\psi$:
$$s_n,s\in \dom(\psi),\ \tau_n\in\partial\psi(s_n),\ \tau\in\RR,\quad s_n\to s,\ \tau_n\to\tau\Longrightarrow\tau\in\partial\psi(s).$$
Thanks to the uniqueness of $s$ proved above, this can also be read as
$$s_n=h(\tau_n),\ s_n\to s,\ \tau_n\to\tau\Longrightarrow s=h(\tau),$$
and then proves the continuity of $h$.

For the reciprocal, we argue by contradiction. If $\psi$ is not strictly convex, then, there exists an interval $[c,d]\subset\dom(\psi)$, with $c<d$ such that $\psi$ is an affine function with a certain slope $\la\in\RR$ in this interval. Moreover $c\ge0$ can be chosen as
$$c=\min_{s\in\dom(\psi)} d_+\psi(s)=\la.$$
If $c=\alpha$, defined by \eqref{defal}, then $\partial\psi(c)=(-\infty,\la]$ and thus, definition \eqref{Defh} of $h$ implies
$$h(\tau)=c\qquad\forall\,\tau<\la,\ h(\la)\ge d.$$
If $c>\alpha$, then $d_+\psi(s)<\la$ for every $s<c$, and thus
$$h(\tau)< c\qquad\forall\,\tau<\la,\ h(\la)\ge d.$$
In both cases, we conclude that $h$ is not continuous at $\tau=\lambda$.

In order to prove \eqref{resLimu} we first observe that the right-hand side implies $\psi$ strictly convex. Since $h\in{\rm Lip}_{\rm loc}(\RR)$ implies $h$ continuous, we conclude that the left-hand side in \eqref{resLimu} also implies $\psi$ strictly convex. Therefore it is enough to prove the result for $\psi$ strictly convex. As we saw at the beginning of the proof, this implies that for every $\tau\in \RR$ there exists a unique $s\in\dom(\psi)$ such that $\tau\in\partial\psi(s)$, and this $s$ satisfies $h(\tau)=s.$ Then, taking into account \eqref{carsd}, we deduce the existence of $L>0$ such that
$$h(\tau_2)-h(\tau_1)\leq L(\tau_2-\tau_1),$$
for every $\tau_1\leq\tau_2$, is equivalent to
$$s_2-s_1\leq L\big(d_-\psi(s_1)-d_+\psi(s_2)\big),$$
for every $s_1,s_2\in \dom(\psi)$, and then, that (\ref{resLimu}) holds.
\end{proof}

\begin{proof} [Proof of Proposition \ref{deh0}] Since $d_+\psi(\alpha)>-\infty$, we have $\partial\psi(\alpha)=(-\infty,d_+\psi(\alpha)]$, and then $h(s)=\alpha$ for every $s<d_+\psi(\alpha)$. Therefore, if $h$ is derivable at $d_+(\psi(\alpha))$, we must have
$$h(d_+\psi(\alpha))=\alpha,\qquad\lim_{\ep\searrow 0}{h(d_+\psi(\alpha)+\ep)-\alpha\over\ep}=0.$$
Thus, for every $\rho>0$, there exists $\delta>0$ such that $0<\ep<\delta$ implies
$$h(d_+\psi(\alpha)+\ep)<\alpha+\ep\rho,$$
which by \eqref{defh3} can also be read as
$$s\in\dom (\psi),\ d_-\psi(s)\le d_+\psi(\alpha)+\ep\Longrightarrow s<\alpha+\ep\rho,$$
and then that 
$$s\ge\alpha+\ep\rho\Rightarrow d_-\psi(s)>d_+\psi(\alpha)+\ep.$$
Taking $s=\alpha+\ep \rho$ and letting $\ep\to0$, this gives
$$\lim_{\ep\searrow0}{d_-\psi(\alpha+\ep\rho)-d_+\psi(\alpha)\over\ep\rho}>{1\over\rho}\qquad\forall\,\rho>0,$$
and then 
$$\lim_{\ep\searrow0}{d_-\psi(\alpha+\ep)-d_+\psi(\alpha)\over\ep}=\infty.$$
The proof of the reciprocal follows by a similar argument.
\end{proof}

\begin{proof} [Proof of Theorem \ref{Topr}] Taking into account that $\hat m$ belongs to $L^\infty(\Om)$, that $\Om$ is $C^{1,1}$, $f\in L^q(\Om)$, the first assertion in \eqref{HipFc} and $\hat u$, $\hat z$ solutions of \eqref{ostafu} and \eqref{ostad} respectively, we can apply the regularity results for elliptic equations (see e.g. \cite{GiTr}, chapter 7) to deduce that $\hat u$, $\hat z$ belong to $W^{2,q}(\Om)$.

Now, we use that $\hat u\hat z$ satisfies
$$-\Delta(\hat u\hat z)+\hat m\hat u\hat z=-2\nabla\hat u\cdot\nabla\hat z+f\hat z+\partial_sj(x,\hat u)\hat u\qquad\hbox{in }\Om.$$
where thanks to \eqref{HipFc}, $\hat f\in BV(\Om)\cap L^q(\Om)$ and $\hat u,\hat z\in W^{2,q}(\Om)$, we have that the right-hand side of this equation belongs to $BV(\Om)$. Using then that (\ref{condohm}) and definition (\ref{defgs}) of $g$ imply
$$g_-(\hat u\hat z)\le\hat m\hat u\hat z\le g_-(\hat u\hat z)\qquad\hbox{a.e. in }\Om,$$
and that we are assuming $g$ non-decreasing in $\RR$, we can apply Theorem \ref{Threg} to deduce that $\hat m\hat u\hat z$ belongs to $BV(\Om)$.
\end{proof}

\section{Some examples} \label{S5}
In the present section we illustrate the results obtained in the previous one, by applying them to some classical examples. In Section \ref{S6} we will also perform some numerical computations relative to these examples.

\medskip\noindent
{\bf First example.} We consider the case where we are looking for a non-negative function $m$ such that the solution $u$ of the state equation in (\ref{pbco}) minimizes
\beq\label{fucsm} J(u):=\into j(x,u)\,dx,\eeq
and it is such that the norm of $m$ in some space $L^p(\Om)$, $1<p<\infty$ is not too large. This can be modeled by \eqref{pbco}, with $\psi$ given by
$$\psi(s)=\infty 1_{(-\infty,0)}+ks^p1_{[0,\infty)}.$$
with $k$ a positive to parameter to choose. Then,
$$\partial\psi(s)=\begin{cases}
(-\infty,0]&\hbox{if }s= 0,\\
\{kp\,s^{p-1}\}&\hbox{if }s>0,\end{cases}\qquad h(\tau)=\Big({\tau\over kp}\Big)^{1\over p-1}1_{(0,\infty)}.$$
By Theorem \ref{ExCoo} and by the fact that $\psi$ is strictly convex, we have that $h$ is continuous. It is in $C^1(\RR)$ when $p<2$, i.e. when condition \eqref{coderal} holds, and it is locally Lipschitz continuous if $p\le2$, (and then condition \eqref{resLimu} holds in bounded subsets of $\dom(\psi)$).
 
From \eqref{condohm}, we deduce that, taking $j$ and $f$ regular enough, every solution $\hat m$ of \eqref{pbco} satisfies
\beq\label{ExhmE1}
\hat m=\Big({\hat u\hat z\over kp}\Big)^{1\over p-1}1_{\{\hat u\hat z>0\}}\qquad\hbox{a.e. in }\Om,\eeq
with $\hat u$ and $\hat z$ the solutions of \eqref{ostafu} and \eqref{ostad} respectively. 
In particular, this means that $\hat u$, $\hat z$ solve the nonlinear system
\beq\label{nolsie1}
\begin{cases}
\dis-\Delta\hat u+\Big({\hat u\hat z\over kp}\Big)^{1\over p-1}1_{\{\hat u\hat z>0\}}\hat u=f&\hbox{in }\Om,\\
\dis-\Delta\hat z+\Big({\hat u\hat z\over kp}\Big)^{1\over p-1}1_{\{\hat u\hat z>0\}}\hat z=\partial_sj(x,\hat u)&\hbox{in }\Om,\\
\dis\hat u=\hat z=0&\hbox{on }\partial\Om.
\end{cases}\eeq
By Theorem \ref{Topr} we obtain that
$$\hat m\hat u\hat z={\big(\hat u\hat z\big)^{p'}\over(kp)^{1\over p-1}}\,1_{\{\hat u\hat z>0\}}$$
belongs to $BV(\Om)$ (and even to $W^{1,1}(\Om)$). However, in order to have $\hat m$ in $W^{1,1}(\Om)$, we need $(\hat u\hat z)^{2-p\over p-1}\nabla (\hat u\hat z)1_{\{\hat u\hat z\}}$ to be in $L^1(\Om)^N$, which is not clear for $p>2$, i.e. when $h$ is not locally Lipschitz.

\medskip\noindent
{\bf Second example.} We now consider the case where we want to minimize functional \eqref{fucsm}, with $u$ the solution of the state equation in (\ref{pbco}), under the constraint $m\in[\alpha,\beta]$, with $0\leq\alpha<\beta$. The problem corresponds to \eqref{pbco}, with $\psi$ given by
\beq\label{Psi2ej}\psi(s)=\infty1_{(-\infty,\alpha)\cup (\beta,\infty)}.\eeq
Thus, $\dom(\psi)=[\alpha,\beta]$, and 
$$\partial\psi(s)=\begin{cases}
(-\infty,0]&\hbox{if }s=\alpha\\
\{0\}&\hbox{if }\alpha<s<\beta\\
[0,\infty)&\hbox{if }s=\beta,\end{cases}
\qquad
h(\tau)=\begin{cases}
\alpha&\hbox{if }\tau<0\\
\beta&\hbox{if }\tau\ge0,\end{cases}$$
As expected, since $\psi$ is not strictly convex, we get by \eqref{resupmu} that $h$ is not continuous. Condition \eqref{condohm} reads in this case as
\beq\label{condmu2e}
\begin{cases}
\hat m=\alpha&\hbox{a.e. in }\big\{\hat u\hat z<0\}\\
\hat m=\beta&\hbox{a.e. in }\big\{\hat u\hat z>0\}\\
\hat m\in[\alpha,\beta]&\hbox{a.e. in }\big\{\hat u\hat z=0\}.
\end{cases}\eeq
Therefore, the value of $\hat m$ is not determined on the set where $\hat u\hat z$ vanishes. When this set has zero measure, assertion (\ref{condmu2e}) shows that $\hat m$ is a bang-bang control, which only takes the values $\alpha$ and $\beta$. Taking into account that $g(s)=h(s)s$ is non-decreasing, we deduce from Theorem \ref{Topr} that $\hat m\hat u\hat z$ is in $BV(\Om)$.

As a simple case where we can assure that $\hat m$ is a bang-bang control, we take
\beq\label{fF2ej}
j(x,s)=\gamma(x)s\qquad\hbox{a.e. }x\in\Om,\ \forall\,s\in\RR,\eeq
Assuming $\gamma,f\in L^q(\Om)$, with $q$ satisfying \eqref{defp}, we have that $\hat u,\hat z$ are in $W^{2,q}(\Om)$. Therefore, \eqref{ostafu} and \eqref{ostad} imply
$$f=0\ \hbox{ a.e. in }\{\hat u=0\},\qquad\gamma=0\ \hbox{ a.e. in }\{\hat z=0\}.$$
Thus, assuming $|\{f=0\}|=|\{\gamma=0\}|=0,$ we conclude that the set $\{\hat u\hat z=0\}$ has zero measure.

It is also simple to give a counterexample where the set $\{\hat u\hat z=0\}$ has positive measure and the control $\hat m$ is not a bang-bang control. Just take $\tilde m\in C^0(\overline\Om;[\alpha,\beta])$, not constant, and $\tilde u$ the solution of \eqref{ostafu}, with $\hat m$ replaced by $\tilde m$. Defining
$$j(x,s)=|s-\tilde u(x)|^2\qquad\hbox{a.e. }x\in\Om,\ \forall\,s\in\RR,$$
we deduce that problem \eqref{pbco} has the unique solution $\hat m=\tilde m$, for which the functional vanishes. Observe that in this case $\partial_sj(x,\hat u)=0$ a.e. in $\Om$, and then $\hat z$ is the null function. Therefore the set $\{\hat u\hat z=0\}$ is the whole set $\Om$, and condition \eqref{condmu2e} does not provide any information about $\hat m$.

\medskip\noindent
{\bf Third example.} We consider a mixture of the first and second examples. Now, the goal is to minimize \eqref{fucsm}, with $u$ the solution of the state equation in (\ref{pbco}) and $m\in [\alpha,\beta]$, such that its norm in $L^p(\Om)$ is not too great, with $1\leq p<\infty$. The problem corresponds to take in \eqref{pbco}
$$\psi(s)=\infty1_{(-\infty,\alpha)\cup (\beta,\infty)}+k s^p1_{[\alpha,\beta]},$$
with $k>0$ a positive constant to choose.

In the strictly convex case $p>1$, we have
$$\partial\psi(s)\hskip-2pt=\hskip-2pt\begin{cases}
(-\infty,kp\alpha^{p-1}]&\hbox{if }s=\alpha\\
\{kps^{p-1}\}&\hbox{if }\alpha<s<\beta\\
[kp\beta^{p-1},\infty)&\hbox{if }s=\beta,\end{cases}
\qquad
h(\tau)\hskip-2pt=\hskip-2pt\begin{cases}
\alpha&\hbox{if }\tau<kp\alpha^{p-1}\\
\dis\Big({\tau\over kp}\Big)^{1\over p-1}&\hbox{if }kp\alpha^{p-1}\leq \tau<kp\beta^{p-1}\\
\beta&\hbox{if }\tau\ge kp\beta^{p-1}.\end{cases}$$
As in the first example, the strict convexity of $\psi$ provides a function $h$ which is continuous. Therefore, the optimal controls are continuous and even, they are in some Sobolev space if $p\le2$ (assuming $j$ and $f$ smooth enough).

In the case $p=1$, we have
$$\partial\psi(s)=\begin{cases}
(-\infty,k]&\hbox{if }s=\alpha\\
\{k\}&\hbox{if }\alpha<s<\beta\\
[k,\infty)&\hbox{if }s=\beta,\end{cases}
\qquad
h(\tau)=\begin{cases}
\alpha&\hbox{if }\tau<k\\
\beta&\hbox{if }\tau\ge k.\end{cases}$$
As in the second example, \eqref{condohm} provides
$$\begin{cases}
\hat m=\alpha&\hbox{a.e. in }\big\{\hat u\hat z<k\}\\
\hat m=\beta&\hbox{a.e. in }\big\{\hat u\hat z>k\}\\
\hat m\in[\alpha,\beta]&\hbox{a.e. in }\big\{\hat u\hat z=k\},
\end{cases}$$
and then the optimal controls are bang-bang controls if the set $\{\hat u\hat z=k\}$ has null measure.

Since $h(\tau)\tau$ is still a non-decreasing, Theorem \ref{Topr} proves that $\hat m\hat u\hat z$ is in $BV(\Om)$ for every optimal control $\hat m$. Using also that $\hat m=\alpha$ in a neighborhood of the closed set $\{\hat u\hat z=0\}$ we deduce that in this case $\hat m$ is also in $BV(\Om)$.

As a particular case we can take 
\beq\label{eleme} j(x,s)=f(x)s.\eeq
This is a classical problem corresponding to the minimization of the energy. In this case control problem (\ref{pbco}) can also be written in the simplest form
\beq\label{mien} \max_{\alpha\leq m\leq \beta}\min_{u\in H^1_0(\Om)}\left\{\into \Big(|\nabla u|^2+mu^2-2fu\Big)dx-k\into m\,dx\right\}.\eeq
For $k=0$, it is clear that the solution corresponds to $\hat m=\beta$. Thus, the interesting case corresponds (as we assumed above) to $k>0$. This means that we just want to spend a limited amount of the optimal potential $\beta$ (for example, because it is more expensive).

From \eqref{eleme} we get $\hat z=\hat u$, and then (\ref{condohm}) gives
\beq\label{comuen}\begin{cases}
\hat m=\alpha&\hbox{a.e. in }\{|\hat u|^2<k\}\\
\hat m=\beta&\hbox{a.e. in }\{|\hat u|^2>k\}.
\end{cases}\eeq
Taking $f\in L^q(\Om)$, with $q$ satisfying \eqref{defp}, we have $\hat u$ in $W^{2,q}(\Om)$, and then \eqref{ostafu} provides
$$\sqrt{k}\hat m=f\ \hbox{ a.e. in }\big\{\hat u=\sqrt{k}\big\},\qquad -\sqrt{k}\hat m=f\ \hbox{ a.e. in }\big\{\hat u=-\sqrt{k}\big\}. $$
Therefore, a sufficient condition to have $\hat m$ a bang-bang control is to assume that the set $\{f\in  [-\sqrt{k}\beta,-\sqrt{k}\alpha]\cup [\sqrt{k}\alpha,\sqrt{k}\beta] \}$ has null measure. This holds in particular if $\alpha$ is positive, $f$ belongs to $L^\infty(\Om)$ and $k$ is large enough.

\medskip\noindent
{\bf Fourth example.} Related to the case $p=1$ in the third example, let us take
$$\psi(s)=\infty 1_{(-\infty,\alpha)\cup (\beta,\infty)}-k s1_{[\alpha,\beta]},$$
with $0\leq \alpha<\beta$, $k>0$. In this case we are interested in controls $m$ which take their values in $[\alpha,\beta]$ and its integral is large. Similarly to the third example, we have
$$\partial\psi(s)=\begin{cases}
(-\infty,-k]&\hbox{if }s=\alpha\\
\{-k\}&\hbox{if }\alpha<s<\beta\\
[-k,\infty)&\hbox{if }s=\beta,\end{cases}
\qquad
h(\tau)=\begin{cases}
\alpha&\hbox{if }\tau<-k\\
\beta&\hbox{if }\tau\ge-k,\end{cases}$$
Theorem \ref{ExCoo} proves that the optimal controls satisfy
$$\begin{cases}
\hat m=\alpha&\hbox{a.e. in }\big\{\hat u\hat z<-k\}\\
\hat m=\beta&\hbox{a.e. in }\big\{\hat u\hat z>-k\}\\
\hat m\in[\alpha,\beta]&\hbox{a.e. in }\big\{\hat u\hat z=-k\},
\end{cases}$$
and then they are not continuous in general. Moreover, in this case the function
$$g(\tau)=h(\tau)\tau=\begin{cases}
\alpha\tau&\hbox{if }\tau<-k\\
\beta\tau&\hbox{if }\tau\ge-k,\end{cases}$$
decreases at $\tau=-k$. Thus, this is an example where Theorem \ref{Topr} does not apply and therefore, we do not know if $\hat m\hat u\hat z$ is in $BV(\Om)$.\par
A classical example corresponds to the maximization of the energy i.e. (compare with (\ref{eleme}))
\beq\label{elecom} F(x,s)=-f(x)s.\eeq
Similarly to (\ref{mien}), it is known that the problem can also be written as
\beq\label{maen} \min_{\alpha\leq m\leq \beta}\min_{u\in H^1_0(\Om)}\left\{\into \Big(|\nabla u|^2+mu^2-2fu\Big)dx-k\into m\,dx\right\}.\eeq
For $k=0$ the solution is given by $\hat m=\alpha$ and then the interesting case is $k>0$, as assumed above. Now, the function $\hat z$ is equal to $-\hat u$, and therefore $\hat m$ satisfies (compare with \eqref{comuen}
$$\begin{cases}
\hat m=\alpha&\hbox{a.e. in }\big\{k<|\hat u|^2\}\\
\hat m=\beta&\hbox{a.e. in }\big\{|\hat u|^2<k\}.
\end{cases}$$
Thus, a suffcient condition to assure that the optimal controls only take the values $\alpha$ and $\beta$ is also to asume that the set $\{f\in  [-\sqrt{k}\beta,-\sqrt{k}\alpha]\cup [\sqrt{k}\alpha,\sqrt{k}\beta] \}$ has null measure.

\section{Some numerical simulations}\label{S6}

In this section, we illustrate the results of the previous ones through the numerical resolution, in the 2D case, of problem \eqref{pbco} for the first three examples in Section \ref{S5}. For the first example we will consider $p=2$ and for the third one $p=1$. We apply a gradient descent method with projection. It depends of the function $\psi$ associated to the volume constraint of the potential. The corresponding algorithm is related to Theorem \ref{TConOp} providing the optimality conditions to \eqref{pbco}. We refer to \cite{All}, \cite{Cas} for similar algorithms in optimal design problems. It reads as follows.

\begin{itemize}
\item Initialization: choose an admisible function $m_0\in L^1(\Om)$, such that $\Psi(m_0)<\infty$.
\item For $j\ge0$, iterate until stop condition as follows.
\begin{itemize}
\item Compute $u_j,z_j$ solution of (\ref{ostafu}), (\ref{ostad}), for $\hat m=m_j$.
\item Compute $\tilde m_j$ descent direction associated to $u_j$ and $z_j$, as:
$$\tilde m_j=\begin{cases}
\dis{u_jz_j-2km_j\over\|u_jz_j-2km_j\|_{L^{2}(\Om)}}&\hbox{in the first example,}\\\sgn(u_jz_j)&\hbox{in the second example,}\\
\sgn(k-u_jz_j)&\hbox{in the third example.}
\end{cases}$$
\item Update the function $m_j$:
$$m_{j+1}=P_\psi(m_j+\epsilon_j \tilde m_j)$$
where $P_\psi$ is the projection operator from $\RR$ into the domain of $\psi$, i.e.
$$P_\psi(s)=\begin{cases}
s^+&\hbox{in the first example,}\\
\min\{\beta,\max\{s,\alpha\}\}&\hbox{in the second and third examples.}
\end{cases}$$
\end{itemize}
\item Stop if $\frac{|I(m_j)-I(m_{j-1)}|}{|I(m_0)|}<tol$, for $tol>0$ small. 
\end{itemize}

In all the simulations we have chosen $\Om$ as the ball $B$ of center zero and radius one in dimension two. For the first example, we have chosen a simple case where the solution is radial. Thus, we just solve the corresponding one-dimensional problem. The implementation for this example has been carried out using Matlab R2022a. The second and third examples have been implemented using the free software FreeFemm++ v 4.4-3 (see \cite{Hecht} and http://www.freefem.org/). The corresponding results are as follows.

\medskip\noindent
\textbf{First example.} For $s_0\in\RR$, we take:
$$j(x,s)=\frac12 \big|s-s_0\big|^2,\qquad\psi(s)=\infty 1_{(-\infty,0)}+k|s|^2,\qquad f=1.$$
Taking $m$ as the null function, the solution of the state equation in (\ref{pbco}) is 
$$u(x)={1-|x|^2\over 4}.$$
Thus, the interesting case corresponds to $s_0\in (0,1/4)$. When $k=0$ assumption \eqref{psisl} is not satisfied, and the optimal control is not necessarily given by a function $\hat m\in L^1(\Om)$ (we refer to \cite{bubu05}, \cite{BuCaMa}, \cite{bdm91}, \cite{bdm93}, \cite{bgrv14}, \cite{bmv18}, \cite{buve17} for some other existence results on optimal potentials). Indeed, it can be proved that the optimal control $\hat m$ is given by the Radon measure
$$\hat m={1\over s_0}1_{\{|x|<a\}}dx+{4s_0-1+a^2(1-2\log a)\over 4s_0\log a}1_{\{|x|=a\}}d\sigma,$$
where $d\sigma$ is the 1-dimensional measure and $a$ is characterized by
$$0<a<1,\quad 4s_0-1+a^2<2a^2\log a,$$
$$\log a\int_a^1r(4s_0-1+r^2)\log r\,dr=(4s_0-1+a^2\Big)\int_a^1r\log^2r\,dr.$$
The corresponding optimal control $\hat u$ is given by
$$\hat u(x)=\begin{cases}
s_0 &\hbox{if }|x|<a\\
\dis{1-|x|^2\over4}+{(4s_0-1+a^2)\log|x|\over4\log a}&\hbox{if }a\le|x|\le1.
\end{cases}$$

In Figure \ref{Fig:Ex1CE} we represent the optimal state function $\hat u$ for $s_0=0.1$ (then $a\sim 0.2825$)\par
In Figure \ref{Fig:Ex11} we represent on the top the optimal control and optimal state function obtained by solving the above problem (corresponding to $k=0$) numerically. In the middle and the bottom we also represent the optimal control and optimal state function, taking $k=10^{-7}$ and $k=10^{-5}$ respectively. Since the solutions are radial functions, we have applied the algorithm to the corresponding one-dimensional problem. Observe that in this radial representation the singular part of the optimal measure $\hat m$ is given by two Dirac masses at $r=-a$ and $r=a$. In the numerical computation this provides the two corners which appear in the figure.
\begin{figure}[!h]
\centering
\includegraphics[scale=0.21]{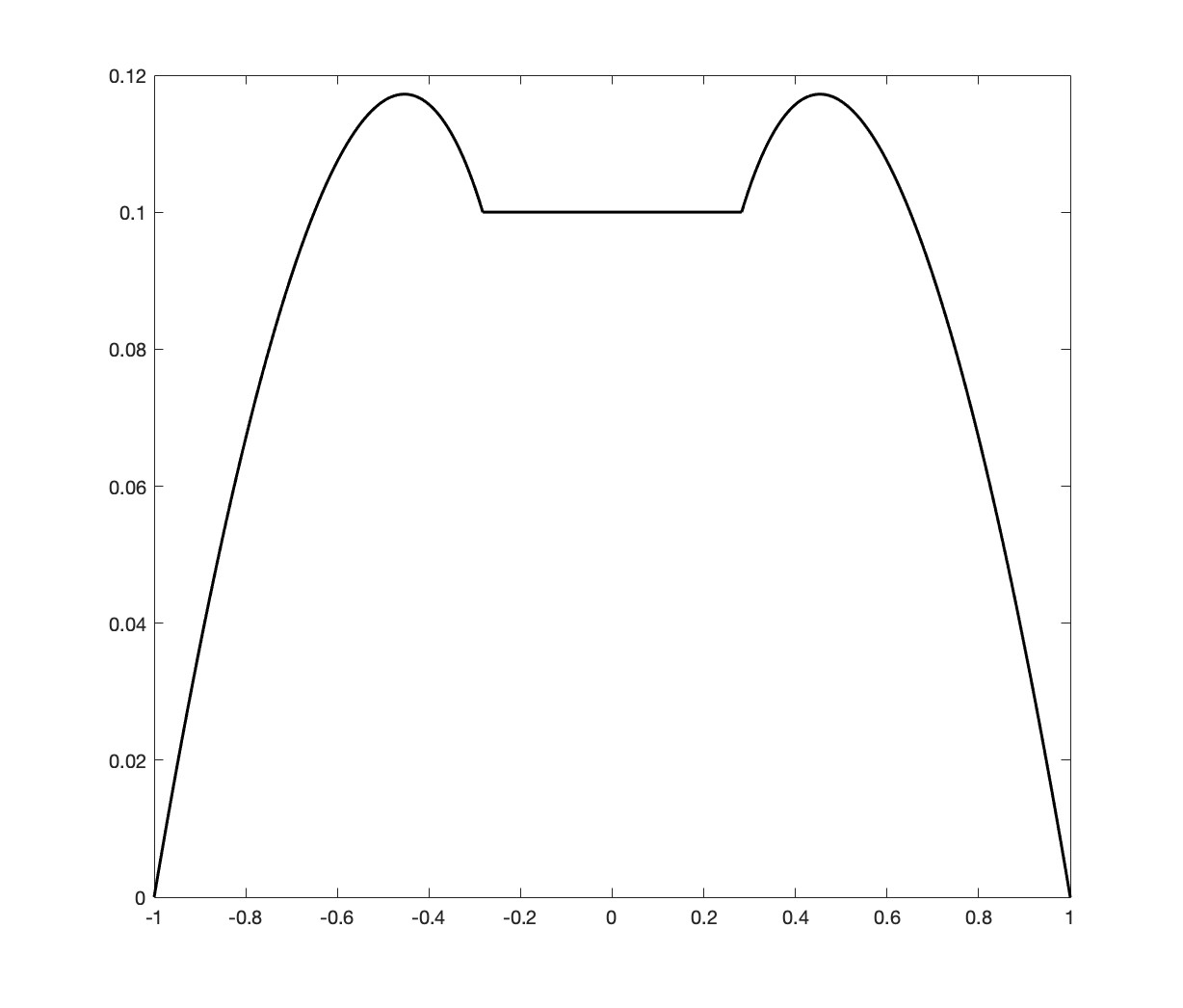}
\caption{Example 1: Optimal state $\hat u$ for $k=0$ (analytic solution).}
\label{Fig:Ex1CE}
\end{figure}

\begin{figure}[!h]
\begin{minipage}[t]{7.4cm}
\centering
\includegraphics[scale=0.21]{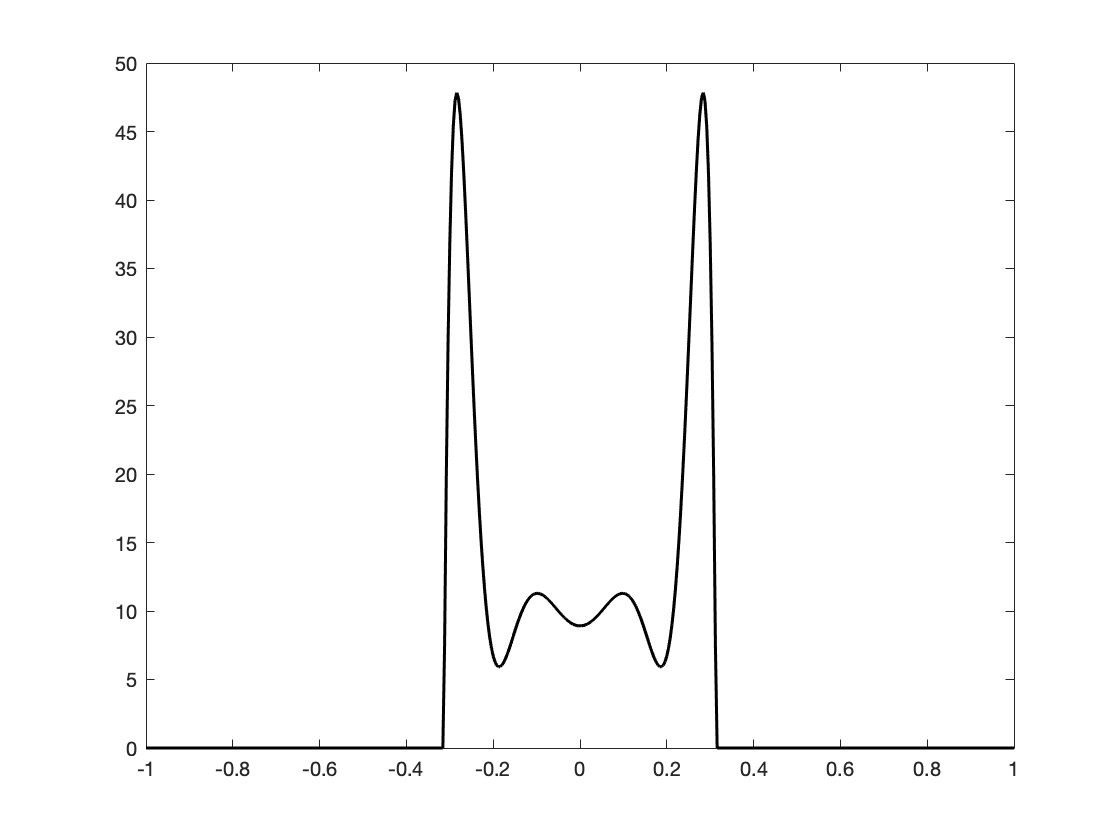}
\end{minipage}
\begin{minipage}[t]{8.4cm}
\centering
\includegraphics[scale=0.21]{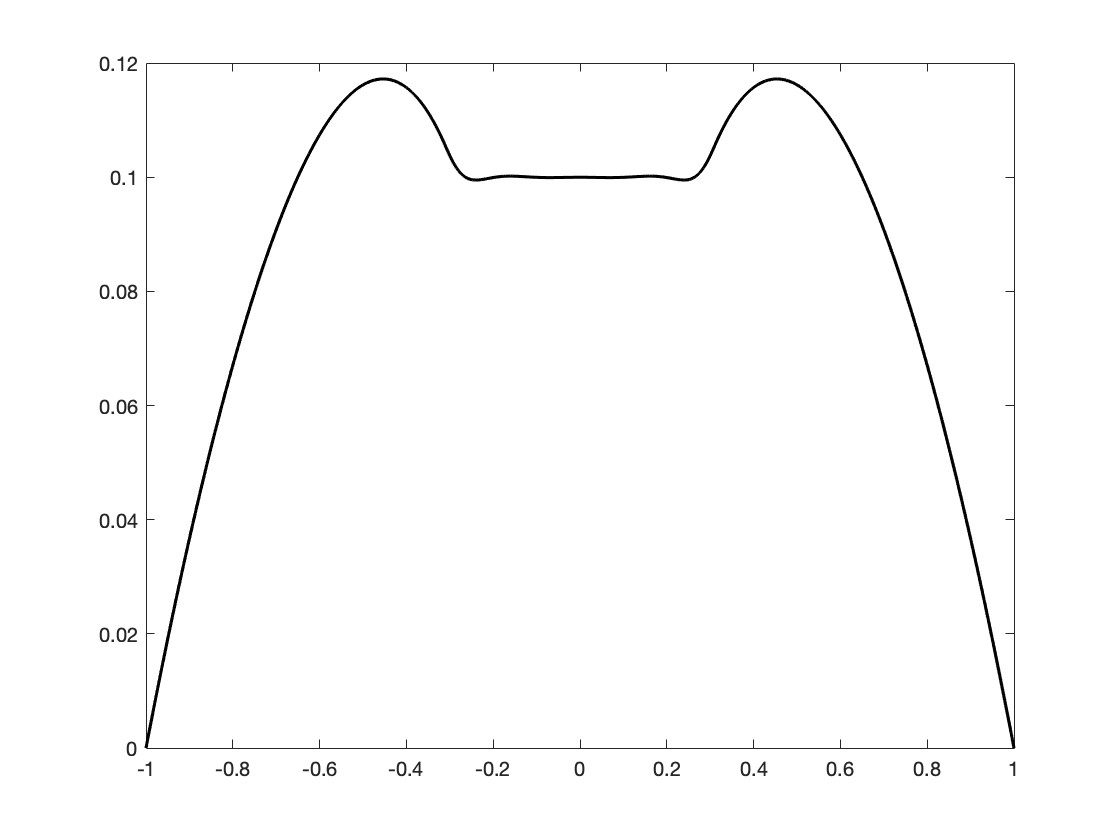}
\end{minipage}
\begin{minipage}[t]{7.4cm}
\centering
\includegraphics[scale=0.21]{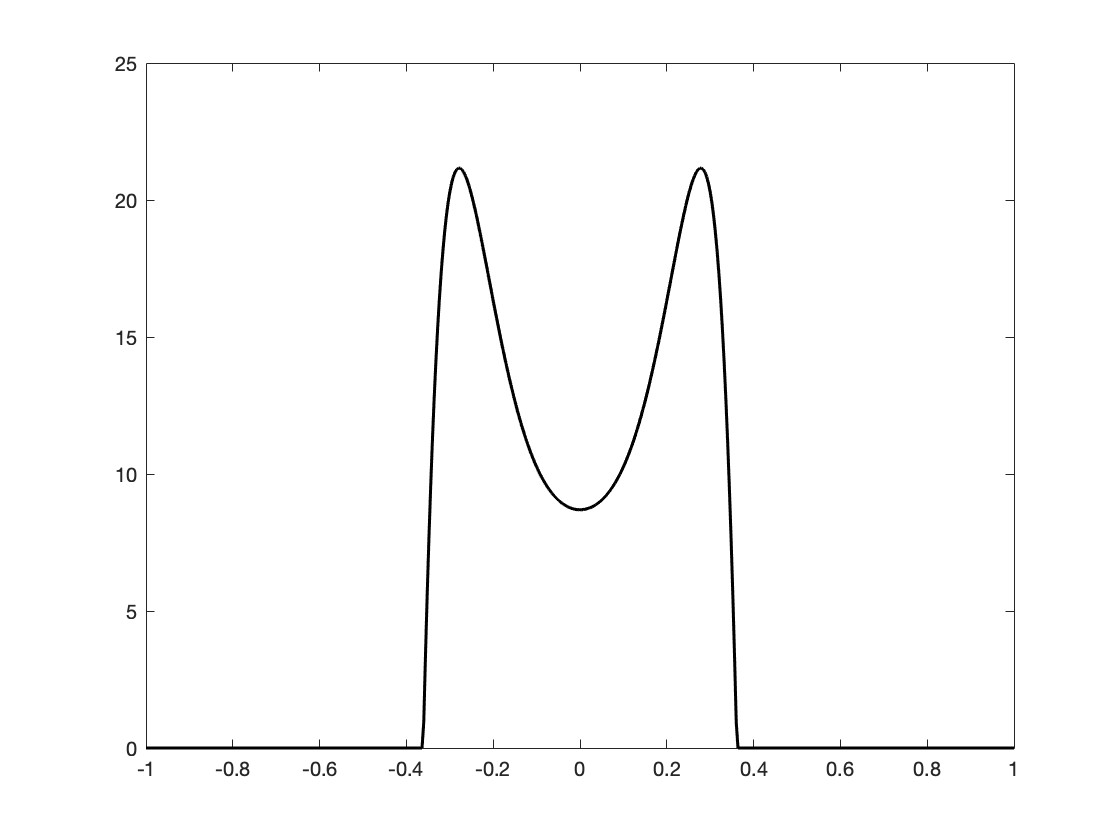}
\end{minipage}
\begin{minipage}[t]{8.4cm}
\centering
\includegraphics[scale=0.21]{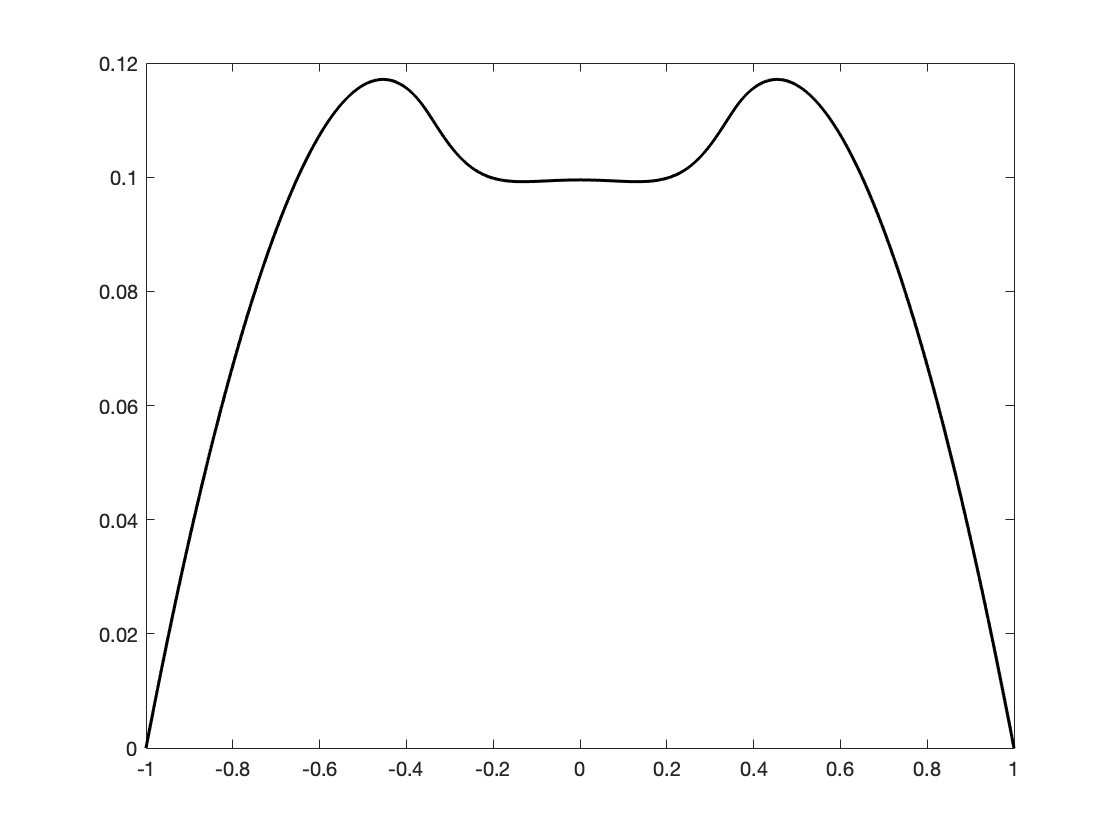}
\end{minipage}
\begin{minipage}[t]{7.4cm}
\centering
\includegraphics[scale=0.21]{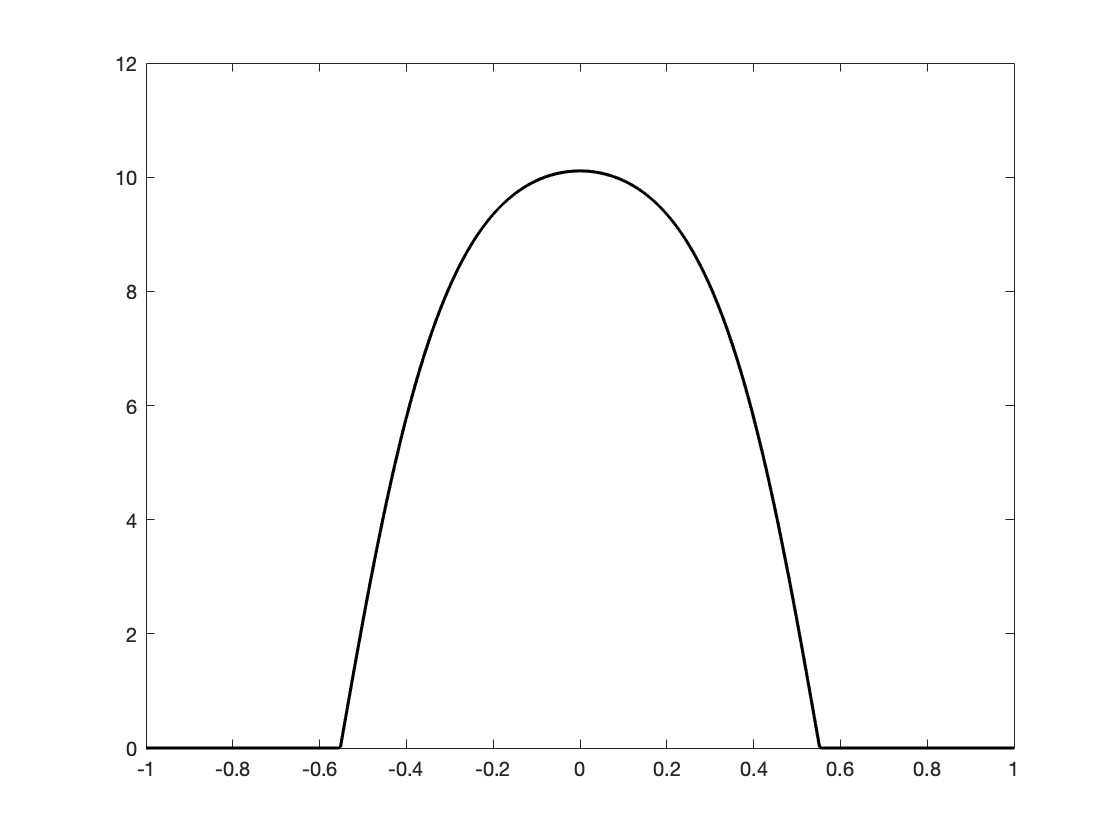}
\end{minipage}
\begin{minipage}[t]{8.4cm}
\centering
\includegraphics[scale=0.21]{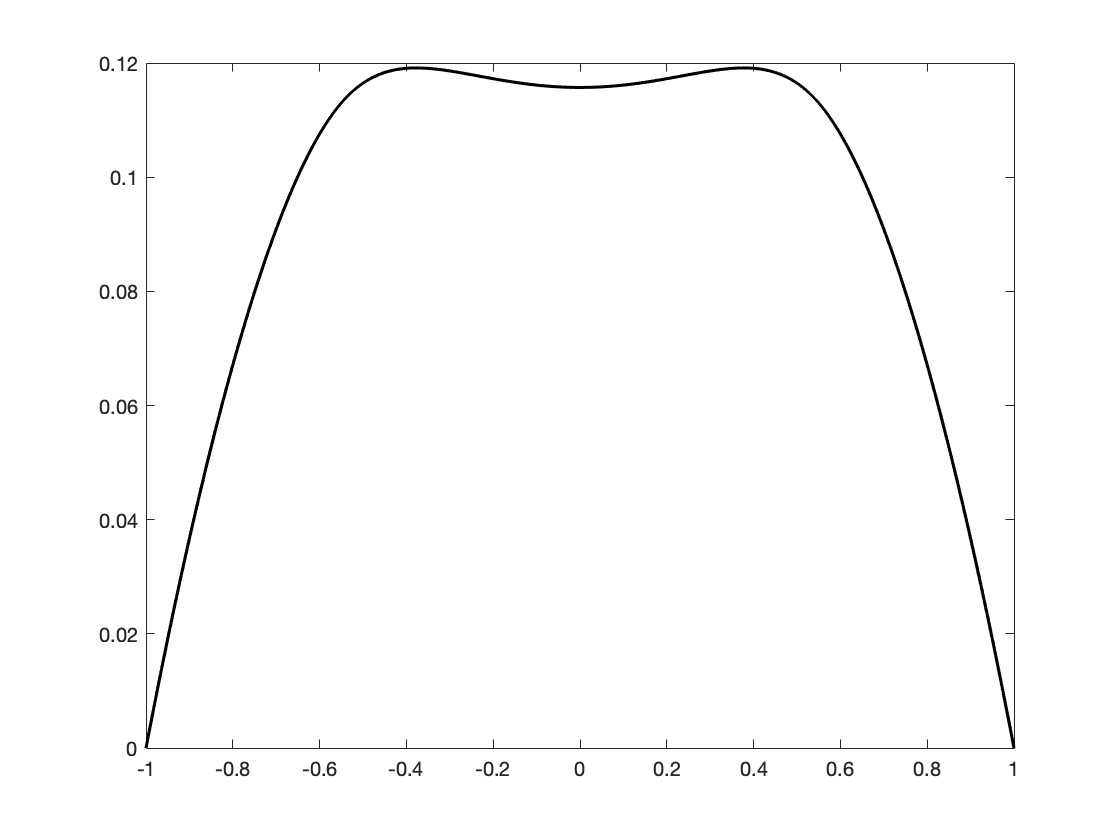}
\end{minipage}
\caption{First example: Optimal control $\hat m$ (left), and optimal state $\hat u $ (right). Top: $k=0$. Middle: $k=10^{-5}$. Bottom: $k=10^{-7}$.}
\label{Fig:Ex11}
\end{figure}
\par\bigskip\noindent
\textbf{Second example.} As a particular case of the second example in the previous section, we take
$$j(x,s)=g(x)s,\qquad\psi(s)=\infty 1_{(-\infty,\alpha)\cup(\beta,+\infty),}$$
with
$$g(x_1,x_2)=x_1^2-x_2^2.$$
The right-hand side in the state equation in (\ref{pbco}) is given by
$$f (x_1,x_2)=10(x_1^2+x_2^2)\sin\Big(13\arctan\Big|\frac{x_2}{x_1}\Big|\Big)1_{\{|x_1|>10^{-10}\}}.$$ 
Since the function $f$ changes sign several times, we expect that the corresponding optimal state function $\hat u$, solution of \eqref{ostafu}, changes sign several times too. The function $g$, which is the right-hand side of the adjoint state function $\hat z$, solution of \eqref{ostad}, also changes sign four times. Taking into account \eqref{condmu2e}, this produces a bang-bang control with $\alpha$ and $\beta$ being exchanged in several regions. For our numerical experiments we consider $\al=0$, and we take three different values for $\beta$: $1$, $10^2$ and $10^4$. The corresponding results are shown in Figure \ref{Fig:Ex2}. We observe that as $\be$ grows up, the optimal control $\hat m$ is concentrated on smaller sets. We expect that for $\be$ tending to infinity, $\hat m$ goes to a singular measure.

\begin{figure}[!http]\hspace{-1cm}
\begin{minipage}[t]{8.4cm}
\centering
\includegraphics[scale=0.14]{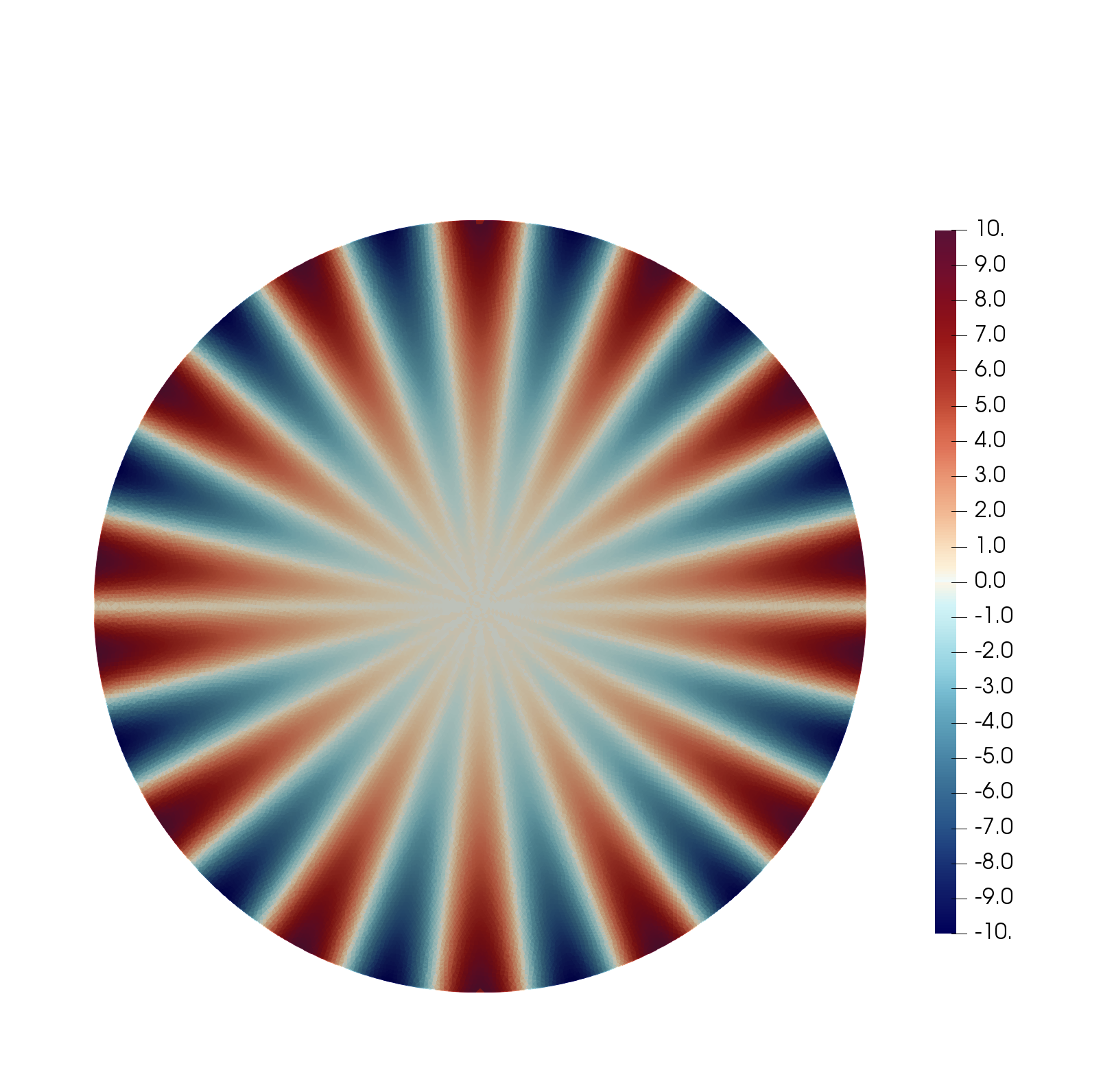}
\end{minipage}\hspace{-.5cm}
\begin{minipage}[t]{8.4cm}
\centering
\includegraphics[scale=0.14]{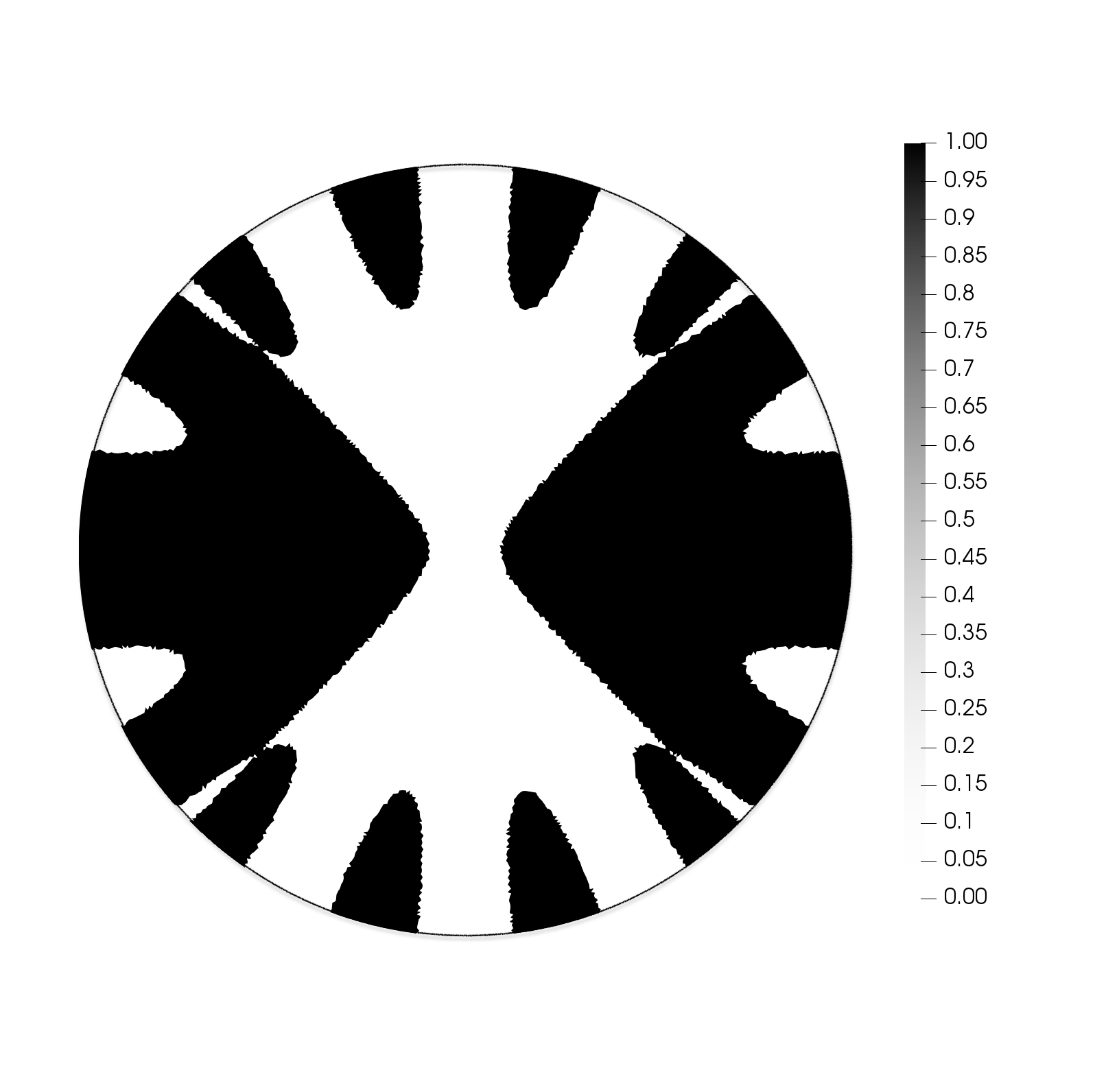}
\end{minipage}

\hspace{-1cm}
\begin{minipage}[t]{8.4cm}
\centering
\includegraphics[scale=0.14]{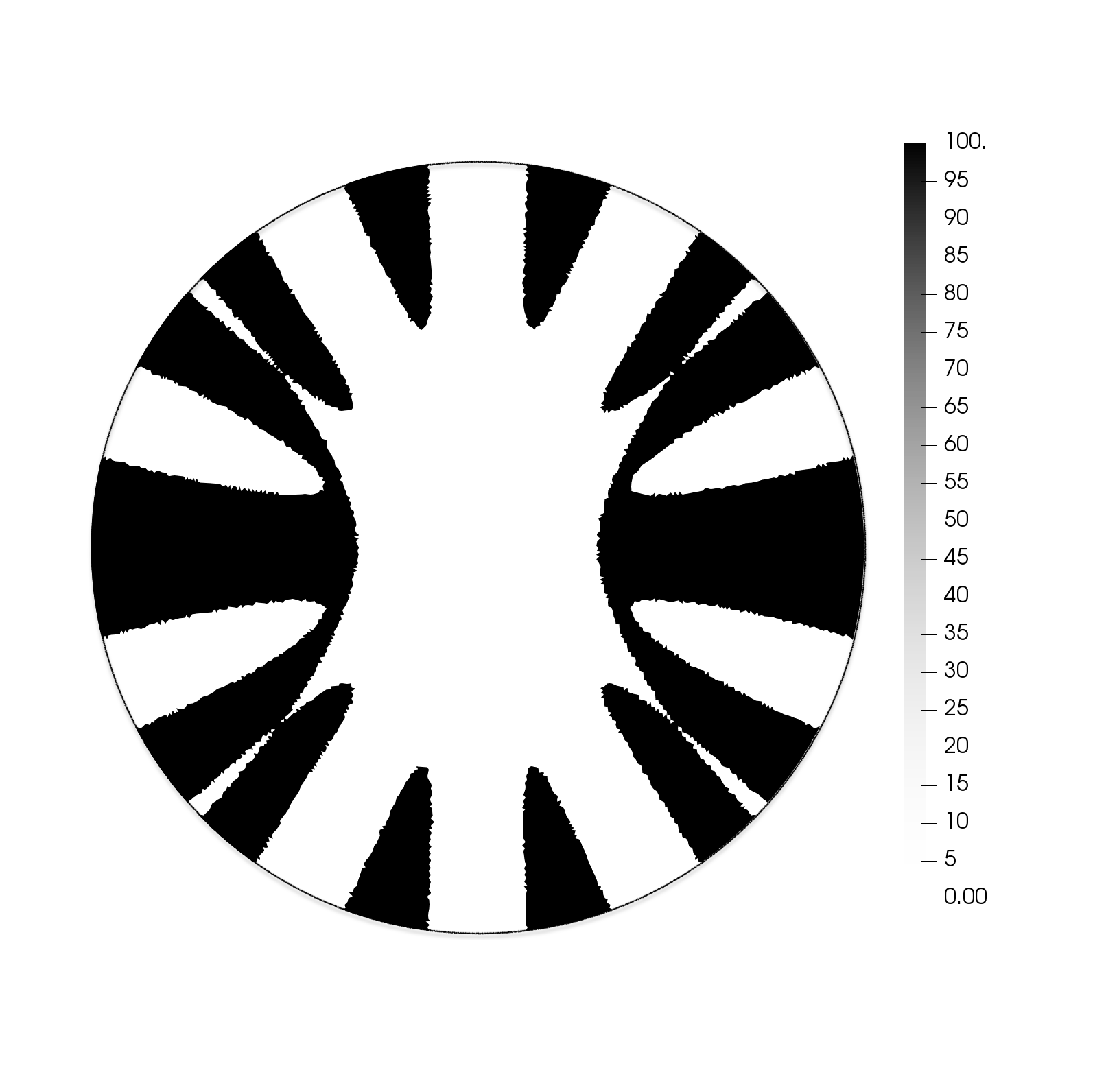}
\end{minipage}
\begin{minipage}[t]{8.4cm}
\centering
\includegraphics[scale=0.14]{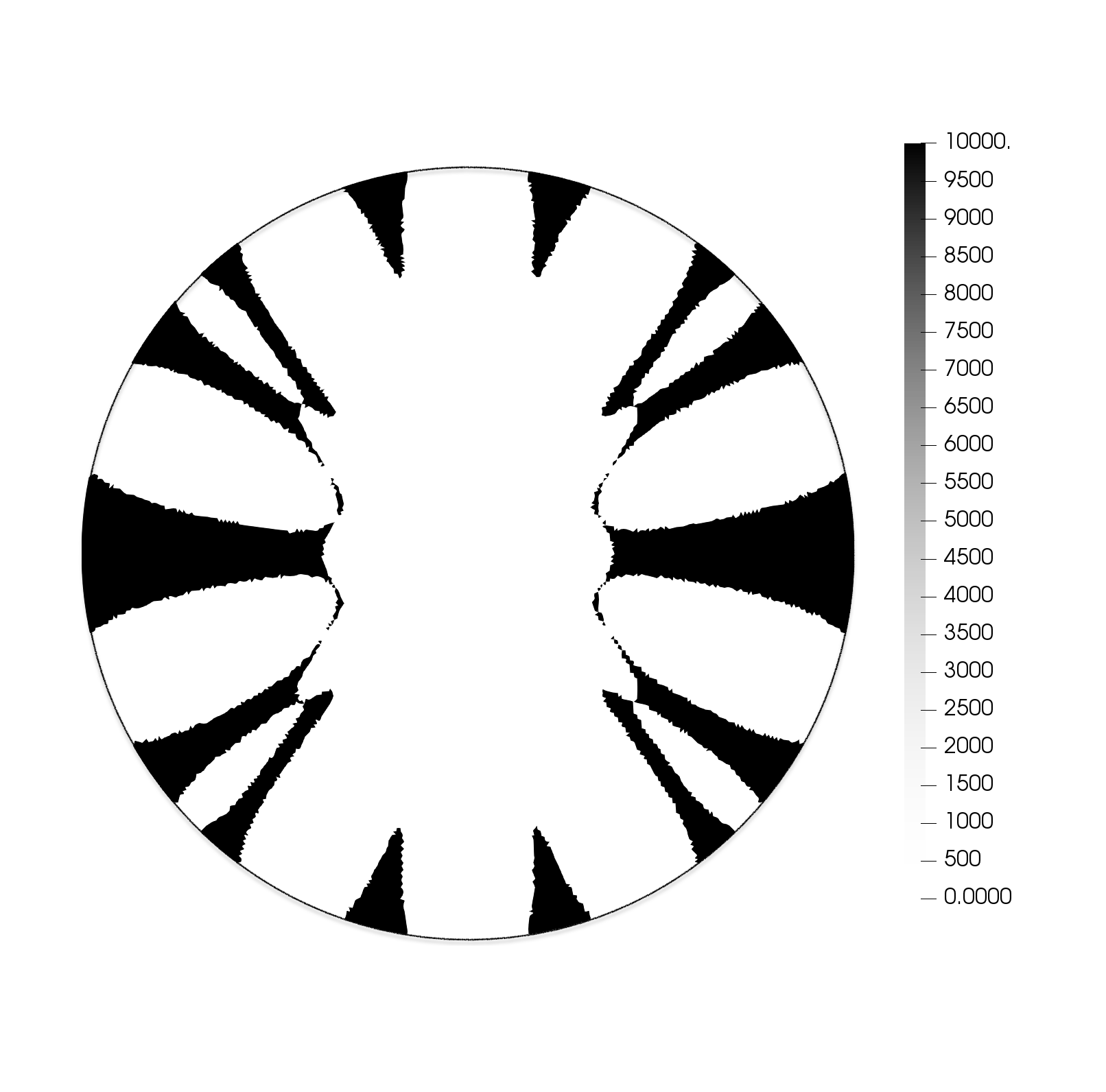}
\end{minipage}

\caption{Second example: right-hand side $f$: (top left). Optimal control $\hat m$ for $\beta=1$ (top right), $\beta=10^2$ (down left) and $\beta=10^4$ (down right).}
 \label{Fig:Ex2}
\end{figure}
\par\medskip\noindent
\textbf{Third Example.} In this case we take
$$j(x,s)=s,\qquad\psi(s)=\infty 1_{(-\infty,\alpha)\cup(\beta,+\infty)}+ks1_{[\al,\be]},$$
and
$$f(x_1,x_2)=1_{\om_1}(x_1,x_2)+1_{\om_2}(x_1,x_2)+1_{\om_3}(x_1,x_2)+1_{\om_4}(x_1,x_2).$$ 
where $\om_i=B(c_i, r)$ ($i=1,\dots,4$) are small balls of radius $r=\sqrt{3}/10$ and centers $c_1=(0,0.5)$, $c_2=(0,-0.5)$, $c_3=(0.5,0)$ and $c_4=(-0.5,0)$. Taking $\alpha=0$, $\beta=1$, we have carried out three numerical experiments corresponding to $k=0.00175$, $k=0.0014$ and $k=0.001$ respectively. The results are given in Figure \ref{Fig:Ex3}. We get a bang-bang optimal potential for the three experiments. Moreover, as expected, the norm in $L^1(B)$ of $\hat m$ increases when $k$ decreases, taking the values $0.172766$, $0.531971$ and $0.874948,$ respectively. 

\begin{figure}[!http]\hspace{-1cm}
\begin{minipage}[t]{8.4cm}
\hspace{-1.2cm}\includegraphics[scale=0.30]{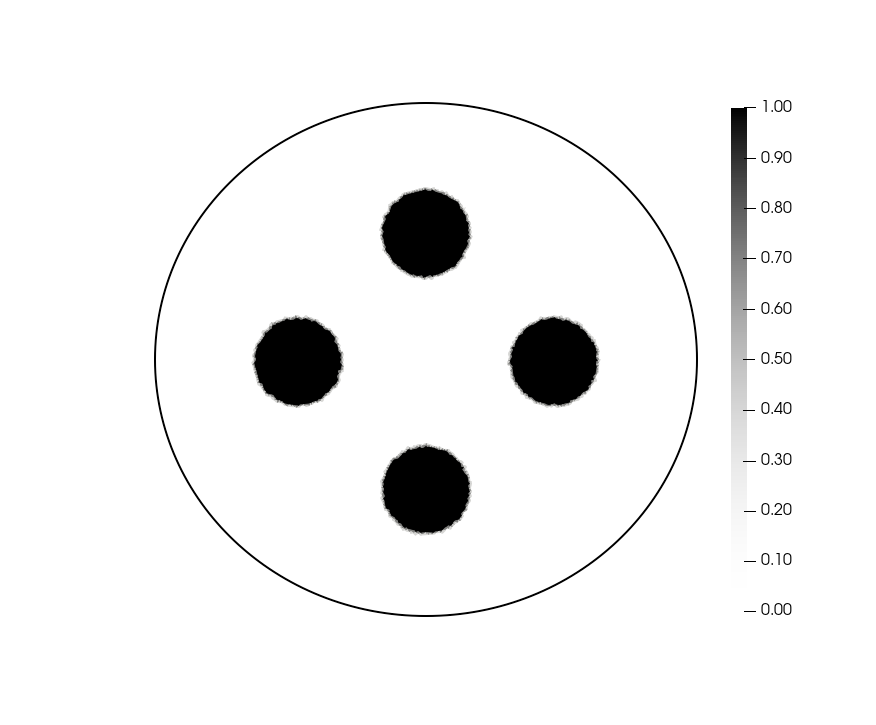}
\end{minipage}
\begin{minipage}[t]{8.4cm}
\centering
\includegraphics[scale=0.42]{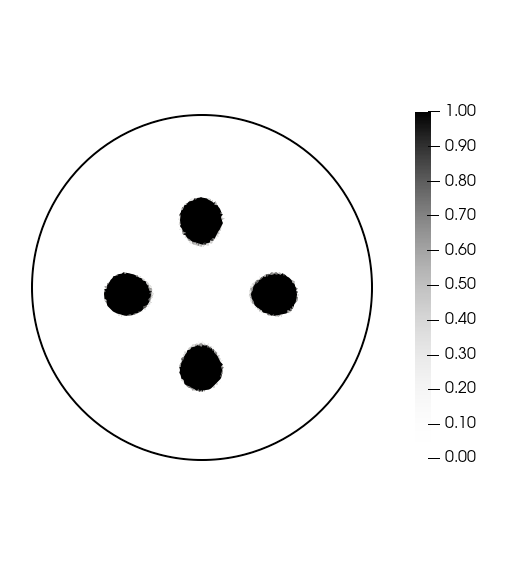}
\end{minipage}

\hspace{-1cm}
\begin{minipage}[t]{8.4cm}
\centering
\includegraphics[scale=0.42]{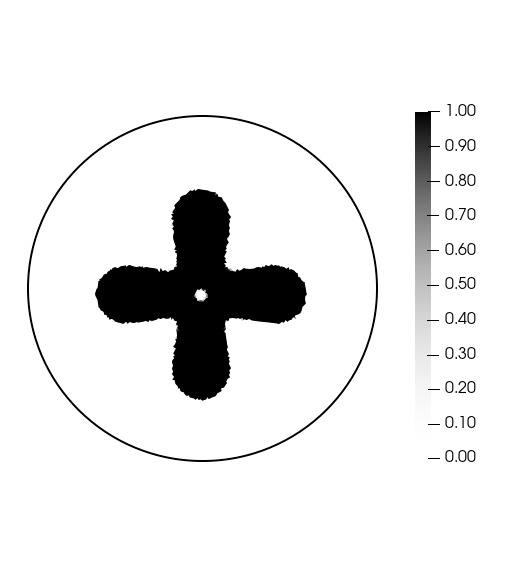}
\end{minipage}
\begin{minipage}[t]{8.4cm}
\centering
\includegraphics[scale=0.42]{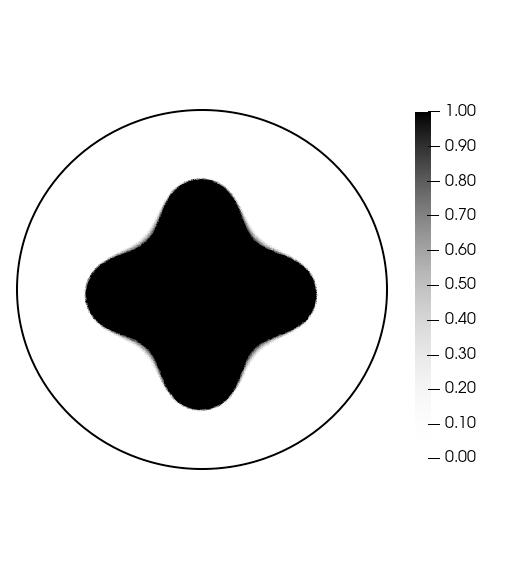}
\end{minipage}
\caption{Right-hand side $f$ (top left). Optimal control $\hat m$ for $k=0.00175$ (top right), $k=0.0014$ (down left) and $k=0.001$ (down right).}
\label{Fig:Ex3}
\end{figure}

\bigskip
\noindent{\bf Acknowledgments. }The work of GB is part of the project 2017TEXA3H {\it``Gradient flows, Optimal Transport and Metric Measure Structures''} funded by the Italian Ministry of Research and University. GB is member of the Gruppo Nazionale per l'Analisi Matematica, la Probabilit\`a e le loro Applicazioni (GNAMPA) of the Istituto Nazionale di Alta Matematica (INdAM).\par 
The work of JCD and FM is a part of the FEDER project PID2020-116809GB-I00 of the {\it Ministerio de Ciencia e Innovaci\'on} of the government of Spain and the project UAL2020-FQM-B2046 of the {\it Consejer\'{\i}a de Transformaci\'on Econ\'omica, Industria, Conocimiento y Universidades of the regional government of Andalusia, Spain.}
\bigskip

\end{document}